\numberwithin{equation}{section}\theoremstyle{definition}
 \newtheorem{Theorem}[equation]{Theorem}
 \newtheorem{Prop}[equation]{Proposition}
 \newtheorem{Lemma}[equation]{Lemma}
 \newtheorem{Notation}[equation]{Notation}
 \newtheorem{Defn}[equation]{Definition}
 \newtheorem{Example}[equation]{Example}
 \newtheorem{Remark}[equation]{Remark}
 \newtheorem{Remarks}[equation]{Remarks}
\newtheorem{Cons}[equation]{Consequences}
\newtheorem{Facts}[equation]{Facts}
\def\enumerate{\begingroup\ifnum\@enumdepth>3\@toodeep\else
      \advance\@enumdepth\@ne
      \edef\@enumctr{enum\romannumeral\the\@enumdepth}%
      \topsep\z@\parskip\z@
      \list{\csname label\@enumctr\endcsname}
        {\@nmbrlisttrue\let\@listctr\@enumctr
         \parsep\z@\itemsep\z@\topsep\z@
         \setcounter{\@enumctr}{0}
         \def \fMakelabel##1{\hss\llap{\rm ##1}}
       }\fi}
\def\t{\mathfrak t}
\def\fs{\mathfrak s}
\def\fR{\mathfrak R}
\def\fS{\mathfrak S}
\def\fT{\mathfrak T}
\def\C{\mathbb C}
\def\N{\mathbb N}
\def\Z{\mathbb Z}
\def\F{\mathbb F}
\def\cO{\mathcal O}
\def\cH{\mathcal H}
\def\cR{\mathcal R}
\def\cP{\mathcal P}
\def\cE{\mathcal E}
\def\cI{\mathcal I}
\def\cC{\mathcal C}
\def\cS{\mathcal S}
\def\cD{\mathcal D}
\def\cA{\mathcal A}
\def\mP{\mathscr P}
\def\mE{\mathscr E}
\def\L{\text{\tiny$\mathbf L$}}
\DeclareMathOperator{\Mod}{Mod}
\DeclareMathOperator{\Hom}{Hom}
\DeclareMathOperator{\GL}{GL}
\DeclareMathOperator{\Res}{Res}
\DeclareMathOperator{\Ind}{Ind}
\DeclareMathOperator{\End}{End}
\DeclareMathOperator{\Stab}{Stab}
\DeclareMathOperator{\im}{im}
\DeclareMathOperator{\Infl}{Infl}
\DeclareMathOperator{\Inv}{Inv}
\def\comp{\models}
\def\boj{\mathbf{j}}
\def\boc{\mathbf{c}}
\def\rV{V^{\otimes r}}
\def\setn{\{1,\ldots,n\}}
\def\setr{\{1,\ldots,r\}}
\def\la{\lambda}
\def\lak{(\lambda_1,\ldots,\lambda_k)}
\def\muk{\{\mu_1,\ldots,\mu_k\}}
\def\pik{(\pi_1,\ldots,\pi_k)}
\def\ck{(c_1,\ldots,c_k)}
\def\jr{(j_1,\ldots,j_r)}
\def\HRn{\cH_{R,q}(\fS_n)}
\def\part{\cP_{R,q}(n,r)}
\def\IR{[IR]}
\def\cIR{[\cI\cR]}
\def\hH{\hat{H}}
\def\rhalf{{r+\frac{1}{2}}}
\def\parth{\cP_{R,q}(n,r+{\textstyle\frac{1}{2}})}
\DeclareMathOperator{\lt}{\ell}
\DeclareMathOperator{\first}{first}
\begin{document}

\title{Iwahori-Hecke algebras acting on tensor space by $q$-deformed letter permutations and $q$-partition algebras}

\author{Geetha Thangavelu $^{*}$, Richard Dipper$^{**}$\\ \\$^{*}${\footnotesize 
Indian Institute of Science Education and Research} \\ {\footnotesize Thiruvananthapuram  695551 India}
\\ \scriptsize{E-mail:tgeetha@iisertvm.ac.in}\smallskip \\$^{**}$ {\footnotesize Institut f\"{u}r Algebra und Zahlentheorie}\\ {\footnotesize Universit\"{a}t Stuttgart, 70569 Stuttgart, Germany}
\\ \scriptsize{E-mail: richard.dipper@mathematik.uni-stuttgart.de}\\
\setcounter{footnote}{-1}\footnote{
{\scriptsize 
The first author's research was partially supported by DST-SERB under grants MATRIX MTR/2017/000424 and Power SPG/2021/004200. The first author would like to extend her gratitude to the Humboldt Foundation and Prof. Steffen Koenig for their support of her stay at University of Stuttgart.}
{\scriptsize }}
\setcounter{footnote}{-1}\footnote{\scriptsize\emph{2010 Mathematics Subject Classification.} Primary 20C15, 20D15. Secondary 20C33, 20D20 }
\setcounter{footnote}{-1}\footnote{\scriptsize\emph{Key words and phrases.}  Tensor space, Iwahori-Hecke algebras, q-partition algebras, Set partitions,
Finite general linear groups, Double centralizer}}
\date{}


 \maketitle

\begin{abstract}  Let $R$ be a  commutative ring with identity and let $V$ be a free $R$-module of rank $n$ for some $n\in\N$. Fixing an $R$-basis $\cE$ of $V$, the symmetric group $\fS_n$ acts on $V$ by permuting $\cE$ and hence on tensor space $\rV$ for $r\in\N$ via the usual tensor product action turning $V$ and $\rV$  into $R\fS_n$-modules. For units $q$ in $R$ we construct an action of the corresponding Iwahori-Hecke algebra 
$\cH_{R,q}(\fS_n)$ which specializes to the action of $R\fS_n$,  if $q$ is taken to $1$. The centralizing algebra of this action is called the $q$-partition algebra 
$\part$. Let $R$ be a field of characteristic not dividing $q$. We prove, that $\part$ is isomorphic to the $q$-partition algebra defined by Halverson and Thiem by different means a few years ago.  

\end{abstract}

\section{Introduction}  Throughout rings will have an identity and for rings $A$ and $B$, we denote the category of left $A$-modules by $\leftindex_{A}{\Mod}$, of right $B$-modules by $\Mod_B$ and $A$-$B$-bimodules by $\leftindex_{A}{\Mod}_B$.

Let $R$ be a commutative ring, $V$ a free $R$-module of rank $n\in\N$, and for $r\in\N$ let $\rV = V\otimes_R V\otimes_R\cdots\otimes_R V$. We fix an $R$-basis $\cE= \{e_1,\ldots,e_n\}$ of $V$, then 
$$
\cE^r = \{e_{i_1}\otimes\cdots\otimes e_{i_r}\in \rV\,|\,i_1,\ldots,i_r\in\setn\}
$$
is an $R$-basis of $\rV$. 
The symmetric group $\fS_r$ on $r$ letters acts on $\rV$ by {\bf place permutations} permuting the tensor factors making $\rV$ an $R\fS_n$-module. Then obviously $R\fS_r$ centralizes the action of the group algebra $RGL_n(R)$, where the general linear group $GL_n(R)$ acts on $V$ and hence diagonally on $\rV$ in the usual way. In fact for infinite fields $R$ it is well known that the actions of the group algebras $RGL_n(R)$ and $R\fS_r$  on $\rV$ satisfy Schur-Weyl duality. That is their images in $\End_R(\rV)$ are mutual centralizing algebras of each other. Indeed, this holds under some mild restrictions for general commutative rings $R$, see [\cite{Cruz}]. Moreover the action of $R\fS_r$ on $\rV$ is the special case $q=1$ of a $q$-deformed action of the Iwahori-Hecke algebra $\cH_{R,q}(\fS_r)$ on $\rV$ due to Jimbo, see [\cite{jimbo}]. 

 On the other hand defining $w(e_{i_1}\otimes\cdots\otimes e_{i_r})= e_{i_{w1}}\otimes\cdots\otimes e_{i_{wr}}$ for $1\leq i_1,\ldots,i_r\leq n$ and $w\in\fS_n$, the symmetric group $\fS_n$ on $n$ letters acts by {\bf letter permutations} on $\cE^r$. Extending this by linearity defines an $R\fS_n$-module structure on $\rV$, which coincides with the one arising by restricting the natural diagonal action of $GL_n(R)$ on $\rV$ to $\fS_n\leq GL_n(R)$. However this action of $R\fS_n$ on $\rV$ does not carry over directly  to $\cH_n = \cH_{R,q}(\fS_n)$, since $\cH_n$ does not have a Hopf coproduct allowing it to act on the tensor product $\rV$. Moreover the embedding of $\fS_n$ into $GL_n(R)$ has no $q$-analogue replacing $GL_n(R)$ by some quantized version of general linear groups.  Still there exists a more complicated  
$\cH_n$-action on $\rV$ which specializes to the letter action of $R\fS_n$ on $\rV$ by taking $q=1$, as we will show in section 3 of this paper. This result is based on the observation of Halverson and Ram,  that $\rV \in\leftindex_{R\fS_n}{\Mod}$ can be obtained by iterated  restriction to $\fS_{n-1}\leq\fS_n$ and induction to $\fS_n$ of the trivial $R\fS_n$- module (see [\cite{halv ram},3.19]). This construction can easily be extended to the Iwahori-Hecke algebra $\cH = \cH_{R,q}(\fS_n)$ and exploited to define an action of $\cH$ on $\rV$ and decompose tensor space into a direct sum of $q$-permutation modules, see theorem \ref{Hecke on tensor space} below. The main theme of our investigation here concerns the endomorphism algebra of this $\cH$-action. In the classical situation of the $\fS_n$-action, this is isomorphic to the partition algebra introduced by  Martin [\cite{martin 1},\cite{martin 2},\cite{martin 3}] and Jones [\cite{jones 1}] provided $n\geq 2r$. In general, the endomorphism algebra 
$ \End_{R\fS_n}(\rV)$ is an epimorphic image of some partition algebra. 

Our version of the $q$-partition algebra  $\part$ is defined  to be the endomorphism ring of the $\cH_{R,q}(\fS_n)$-module $\rV$. Note that this construction works for arbitrary commutative rings $R$ and arbitrary units $q\in R$. Indeed all constructions are stable under base change. Thus one can work over ring $R = \Z[t,t^{-1}]$ of integral Laurent polynomials in the variable $t$ and then prove the results for arbitrary rings $R$ and units $q$ by specialisation. In particular one obtains 

\begin{equation}  
\part \cong R\otimes_{\Z[t,t^{-1}]}\cP_{\Z[t,t^{-1}],q}(n,r).
\end{equation}

Halverson and Thiem defined in [\cite{halv thiem}] a $q$-analogue $Q_r (n, q)$ of the partition algebra $P_r(n)$ to be the endomorphism algebra of  a certain $\C GL_n(q)$-module $\fT_Q^r$. This is obtained by  iterating  $r$ times  Harish–Chandra restriction and induction   between $GL_{n-1}(q)$ and $ GL_n(q)$ applied to the trivial $GL_n(q)$-module $\leftindex_{GL_n(q)}{\C}$. Here $GL_n(q)$ is the finite general linear group over the finite field $\F_q$ with $q$ elements, where $q$ is some prime power. Thus $Q_r(n,q)$ is defined only for prime powers $q$. 

 The main result of this paper is then: 

\begin{Theorem}Suppose that  $q\in\C$ is a prime power and let $n,r\in\N$. Then  
$$
\cP_{\C,q}(n,r)\cong Q_r(n,q).
$$
\hfill$\square$\end{Theorem}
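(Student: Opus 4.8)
The plan is to bridge the two endomorphism algebras through the classical Hecke-algebra realisation of the principal series of $\GL_n(q)$. Fix a Borel subgroup $B\leq\GL_n(q)$ and put $M=\C[\GL_n(q)/B]$; by Iwahori's theorem $\End_{\C\GL_n(q)}(M)\cong\cH$, where $\cH=\cH_{\C,q}(\fS_n)$ and the Hecke parameter is the field size $q$ (this is where the hypothesis that $q$ be a prime power enters, and it matches the parameter of our construction). The functor
\[
\cF=\Hom_{\C\GL_n(q)}(M,-)
\]
restricts to an equivalence between the principal series of $\GL_n(q)$ and the category of $\cH$-modules. Since an equivalence of categories induces isomorphisms of endomorphism algebras, and since Harish-Chandra functors preserve Harish-Chandra series so that $\fT_Q^r$ (built from the principal-series module $\C_{\GL_n(q)}$) again lies in the principal series, it suffices to prove the single statement that $\cF$ sends $\fT_Q^r$ to tensor space $\rV$ as an $\cH$-module.

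I would establish $\cF(\fT_Q^r)\cong\rV$ by induction on $r$. In the base case $r=0$ one has $\fT_Q^0=\C_{\GL_n(q)}$, and $\cF(\C_{\GL_n(q)})$ is the one-dimensional index representation of $\cH$, which is exactly the $q$-deformation of the trivial $\fS_n$-module underlying the $r=0$ term of Theorem \ref{Hecke on tensor space}. For the inductive step recall that $\fT_Q^r$ arises from $\fT_Q^{r-1}$ by Harish-Chandra restriction from $\GL_n(q)$ to $\GL_{n-1}(q)$ followed by Harish-Chandra induction back. The essential input is the compatibility of $\cF$ with these functors across the chain $\GL_{n-1}(q)\leq\GL_n(q)$: under the corresponding equivalences, Harish-Chandra restriction is intertwined with ordinary restriction of modules from $\cH$ to the parabolic subalgebra $\cH_{\C,q}(\fS_{n-1})$, and Harish-Chandra induction with the Hecke induction $\cH\otimes_{\cH_{\C,q}(\fS_{n-1})}(-)$. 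This is the type-$A$ case of the Howlett-Lehrer comparison theorem for Harish-Chandra series. Granting it, $\cF(\fT_Q^r)$ is obtained from the index representation by $r$ rounds of restrict-to-$\cH_{\C,q}(\fS_{n-1})$-then-induce-back, which by the $q$-analogue of the Halverson-Ram decomposition (Theorem \ref{Hecke on tensor space}) is precisely $\rV$.

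Combining the two steps, $\cF$ yields
\[
Q_r(n,q)=\End_{\C\GL_n(q)}(\fT_Q^r)\;\cong\;\End_{\cH}(\rV)=\cP_{\C,q}(n,r),
\]
which is the assertion of the theorem.

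I expect the inductive step to be the main obstacle: one must not merely quote the Howlett-Lehrer intertwining for a single group, but make it functorial in $n$, i.e.\ verify that the equivalences $\cF_n$ for $\GL_n(q)$ and $\cF_{n-1}$ for $\GL_{n-1}(q)$ fit into a square with the Harish-Chandra and Hecke functors that commutes up to natural isomorphism, with Hecke parameters correctly matched under $\cH_{\C,q}(\fS_{n-1})\hookrightarrow\cH$. A point to handle carefully is that Halverson and Thiem pass through the maximal parabolic with Levi $\GL_{n-1}(q)\times\GL_1(q)$; one must check that the $\GL_1(q)$-factor contributes trivially on the Hecke side---which it does, as $\cH_{\C,q}(\fS_1)=\C$---so that the composite collapses to exactly the single restrict-then-induce operation that feeds Theorem \ref{Hecke on tensor space}.
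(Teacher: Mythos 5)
Your proposal is correct in substance and shares the paper's overall skeleton --- transfer along a functor coming from Iwahori's theorem (the paper's \ref{hecke gln}), compatibility of Harish--Chandra induction/restriction with Hecke induction/restriction, and the iterated induction--restriction description of both modules --- but the technical vehicle is genuinely different. You exploit that $\C GL_n(q)$ and $\cH_{\C,q}(\fS_n)$ are semisimple ($q$ a prime power, characteristic zero), so that $\cF=\Hom_{\C G}(M,-)$ is an honest equivalence between the principal-series block and $\cH$-modules, and the isomorphism of endomorphism algebras is then formal. The paper cannot argue this way because it proves the stronger statement (its theorem \ref{main 1}) over a whole modular system $(F,\cO,K)$, where no such equivalence exists: it uses Dipper's Hecke functors $H$ and $\hH$ of \ref{functors}, where $H$ is only a one-sided inverse of $\hH=N\otimes_\cH-$, and the endomorphism transfer rests on the nontrivial fact that $H$ induces isomorphisms on Hom-spaces between modules of the form $NX$, $NY$ for left ideals $X,Y$ of $\cH$ (part 6.) of \ref{Hecke facts}, from [\cite{dipper1},2.21]). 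So your route buys brevity and transparency for the theorem as stated over $\C$, while the paper's route buys integrality and the modular statements that feed its half-integer and Schur--Weyl results; note also that the functorial square you flag as the main obstacle is available in exactly the form you need, namely the paper's \ref{functorisos} from [\cite{didu},1.3.2] (stated for $\hH$; over $\C$ it transports to your $\cF$ by adjunction/semisimplicity). One point should be repaired: your inductive step quietly uses more than the decomposition \ref{Hecke on tensor space}; it needs the statement that $r$ rounds of restrict-then-induce applied to the index representation yield $\rV$, i.e. $\rV\cong(\IR_{n-1}^n)^r(Rx_{(n)})$, which is the paper's theorem \ref{indres} and has its own proof (Mackey decomposition together with the Stirling recursion $s(r,k)=k\cdot s(r-1,k)+s(r,k-1)$). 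Since that result is established in the paper, this is a citation gap rather than a mathematical one, but a self-contained version of your argument would have to include it.
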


The isomorphism in the theorem is obtained by utilizing so called Hecke functors, studied in [\cite{dipper1}] and [\cite{dipper2}], which connect the representation theory of finite general linear groups with the representations of Iwahori-Hecke algebras. These constructions do not work for arbitrary commutative coefficient rings $R$. To avoid subtle ring theoretic considerations we restrict the rings $R$ in this part of the paper to be one of the rings in an $l$-modular system for primes $l$ not dividing $q$.   

Halverson and Thiem also generalised the notion of half-integer partition algebra. Our approach works for this as well, that is we obtain half integer $q$-partition algebras $\parth$  for arbitrary commutative rings $R$ and units $q\in R$. 

Finally we prove that the action of the Iwahori-Hecke algebra on tensor space by $q$-deformed letter permutations satisfies Schur-Weyl duality applying a result of Donkin [\cite{donkin 2}], provided $R$ is a Dedekind domain or a field. Moreover for fields $R$ cellularity of $q$-partition algebras follows most likely by extending  Donkin's work [\cite{donkin 1}] to the $q$-deformed situation and this work allows then as well to determine precisely, when the $q$-partition algebra is quasi-hereditary.

\section{Preliminaries}

Throughout we fix $n,r\in \N$. The symmetric group on some set $M$ is denoted by $\fS_M$. For $M = \{1, \ldots,n\}$ we write also $\fS_n$  instead of $ \fS_M$. A \textbf{composition} $\la = (\la_1, \ldots, \la_k)$ of $n$, written as $\la\models n$, is a finite sequence of nonnegative integers $\la_i$ whose sum equals $n$.  If the sequence $\la$ is weakly decreasing we call $\la$ a \textbf{partition} of $n$ and write $\la \vdash n$. For $\la=\lak\models n$ the \textbf{standard Young subgroup} $Y_\la$ of $\fS_n$ consists of those permutations of $\setn$ leaving invariant the subsets $\{1,\ldots,\la_1\}, \{\la_1+1,\la_1+2,\ldots,\la_1+\la_2\}, \{\la_1+\la_2+1,\ldots\},\ldots$.Thus $Y_\la$ is isomorphic to the direct product $\fS_{\la_1}\times\fS_{\la_2}\times\cdots\times\fS_{\la_k}$.

For the notion of a $\la$-tableau $\t$ the reader is referred e.g. to [\cite{james}]. For $\la\comp n$ let $\t^\la$ be the {\bf initial $\la$-tableau} in which the numbers $1,2,\ldots,n$ appear in order from left to right along successive rows. For example, if $\la = (3,0,2,4)\comp 9$ then 

\begin{equation}
\t^\la\,\, = \,\,\begin{matrix}1&2&3&&\\-&-&-&-&\\4&5&&&\\6&7&8&9\end{matrix}
\end{equation}

The set of $\la$-tableaux, $\la\comp n$,  is denoted by $\fT (\la)$.  The symmetric group $\fS_n$ acts transitively on $\fT (\la)$ by letter permutation. Hence for any $\fs\in\fT(\la)$ there exists precisely one $d(\fs)\in\fS_n$ with

\begin{equation}\label{d of s}
d(\fs)\t^\la = \fs.
\end{equation}

A $\la$-tableau $\fs\in\fT(\la)$ is called \textbf{row standard}, if the entries in the rows of $\fs$ increase from left to right. The set of row standard $\la$-tableaux is denoted by $\fR(\la)$.

Let $R$ be a commutative ring and let $V$ be the free $R$-module of rank $n$. Fix an $R$-basis $\cE= \{e_1,\ldots,e_n\}$ of $V$. Then $\cE^r = \{e_{i_1}\otimes\cdots\otimes e_{i_r}\in \rV\,|\,i_1,\ldots,i_r\in\setn\}$ is an $R$- basis of the tensor space $\rV$. We write $I(n,r)$ for the set of multi-indices 
$\boj = (j_1, \ldots, j_r)$ with $j_\nu\in\setn, \nu = 1,\ldots,n$. Moreover we set $e_\boj = e_{j_1}\otimes\cdots\otimes e_{j_r}\in\cE^r$ and obtain $\cE^r = \{e_\boj\,|\,\boj \in I(n,r)\}$.

The symmetric group $\fS_n$ acts on $\setn$ from the left by letter permutations and hence on $\cE$ by $w(e_i) = e_{w(i)}$ for $i\in\setn$ and $w\in\fS_n$. Moreover it acts  on $I(n,r)$ by $w(\boj) = w(j_1,\ldots,j_r) = (w(j_1), \ldots, w(j_r))$ and on $\cE^r$ by $w(e_\boj) = e_{w(\boj)}$, for $\boj = (j_1,\ldots,j_r)\in I(n,r)$. Extending  this by linearity we obtain an action of the symmetric group algebra  $R\fS_n$ on tensor space $V^{\otimes r}$. Indeed this is the restriction to $\fS_n\leq \GL_n(R)$ of the full linear group $\GL_n(R)$  acting diagonally on the tensor space. Recall that the symmetric group $\fS_r$ on $r$ letters acts from the right on $V^{\otimes r}$ by place permutations. 

We now collect some basic well known facts on the $R\fS_n$-module structure of $\rV$, see for example [\cite{ben halv}]. Being the permutation module of $R\fS_n$ with underlying $\fS_n$-set $\cE^r$, the $R\fS_n$-module $\rV$ decomposes into a direct sum of orbit modules $R\cO$ corresponding to the decomposition of $\cE^r$ into $\fS_n$-orbits $\cO$. The orbits are described by {\bf colored set partitions} of $\setr$ as follows:

 \begin{Defn}\label{defsetpart} We consider set partitions $\mP = \pik, k\in\N$ of $\setr$ with $ k\leq \min(n,r)$  in pairwise disjoint non-empty subsets $\pi_i$ of $\setr$ called \textbf{blocks},  whose union is $\setr$. We identify the elements of the blocks with {\bf places} (i.e. relative positions) of tensor factors  in pure
 tensors of $\rV$. Moreover the $\pi_i$  are ordered  such that the smallest places $b_i \in \pi_i$ for $ i=1,\ldots, k$ satisfy $b_1<b_2<\cdots<b_k$. Thus $1\in\pi_1$, the smallest number $1\leq s\leq r$ not contained in $\pi_1$ is contained in $\pi_2$, the smallest number not contained in $\pi_1\cup\pi_2$ is contained in $\pi_3$  and so on. A \textbf{coloring} $\mP^\boc$ of $\mP$ consists of a map  assigning to each block $\pi_i$ a number $c_i\in\setn$, called \textbf{color} of $\pi_i$ for $i=1,\ldots,k$ such that $c_i=c_j$ only if $i=j$.
\hfill$\square$\end{Defn}

The restriction $k\leq r$ is obvious and $k\leq n$ is needed to ensure that different blocks can be colored differently. For $\boc = \ck$ and $\mP = \pik$ as above we denote the corresponding colored set partition by  $\mP^\boc$ and the collection of colorings of $\mP$ by  $\cC = \cC(\mP)$.

\begin{Lemma}\label{action colors} Let $\mP=\pik$ as above. Then $\fS_n$ acts on $\cC = \cC(\mP)$ by $w\boc = w\ck = (w(c_1),\ldots,w(c_k))$ for $\boc = \ck\in\cC$ and $w\in\fS_n$. Moreover $\{w\boc\,|\,w\in\fS_n\} = \cC$ for any $\boc\in\cC$. Thus $\fS_n$ acts transitively on $\cC$.
\end{Lemma}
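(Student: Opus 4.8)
The plan is to observe that a coloring is nothing but an injective $k$-tuple of elements of $\setn$, and that the asserted formula is simply the componentwise action of $\fS_n$ on such tuples. Concretely, writing $\boc = \ck$, the defining condition ``$c_i = c_j$ only if $i=j$'' from Definition \ref{defsetpart} says precisely that the entries $c_1, \ldots, c_k$ are pairwise distinct, so $\cC = \cC(\mP)$ is identified with the set of all injective maps $\{1,\ldots,k\}\to\setn$, recorded as tuples.

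First I would check that $w\boc = (w(c_1),\ldots,w(c_k))$ really lands in $\cC$, so that the formula defines a map $\fS_n\times\cC\to\cC$. This is immediate: if $c_1,\ldots,c_k$ are pairwise distinct and $w$ is a bijection of $\setn$, then $w(c_1),\ldots,w(c_k)$ are again pairwise distinct, hence $w\boc\in\cC$. The two group-action axioms follow at once by evaluating componentwise: the identity of $\fS_n$ fixes each $c_i$, and $(vw)(c_i) = v(w(c_i))$ for all $i$ yields $(vw)\boc = v(w\boc)$.

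For the transitivity statement $\{w\boc\mid w\in\fS_n\} = \cC$, the inclusion ``$\subseteq$'' is exactly the well-definedness just checked. For ``$\supseteq$'' I would fix an arbitrary target $\boc' = (c_1',\ldots,c_k')\in\cC$ and produce $w\in\fS_n$ with $w\boc = \boc'$. Since the $c_i$ are distinct and the $c_i'$ are distinct, the assignment $c_i\mapsto c_i'$ is a bijection from the $k$-element subset $\{c_1,\ldots,c_k\}$ onto the $k$-element subset $\{c_1',\ldots,c_k'\}$ of $\setn$. Their complements in $\setn$ both have cardinality $n-k\geq 0$, so they admit a bijection as well; gluing the two bijections produces a permutation $w\in\fS_n$ with $w(c_i)=c_i'$ for all $i$, that is $w\boc = \boc'$. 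Fixing any single $\boc$ and letting $\boc'$ range then gives $\cC\subseteq\{w\boc\mid w\in\fS_n\}$.

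There is no serious obstacle here; the only point to keep in mind is that everything hinges on the colorings being injective tuples, and that the hypothesis $k\leq n$ from Definition \ref{defsetpart} is exactly what guarantees both that such colorings exist and that the complementary sets used in the transitivity argument have equal, nonnegative size.
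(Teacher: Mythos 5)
Your proof is correct and follows essentially the same route as the paper, which simply invokes the fact that $\fS_n$ acts $n$-fold (hence $k$-fold) transitively on $\setn$; your explicit construction of $w$ by gluing a bijection between $\{c_1,\ldots,c_k\}$ and $\{c_1',\ldots,c_k'\}$ with a bijection of their complements is exactly the standard proof of that $k$-fold transitivity. You merely spell out the well-definedness and action axioms that the paper leaves implicit, which is fine.
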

\begin{proof} Note that the symmetric group acts $n$-fold and hence $k$-fold transitively on $\setn$ and the lemma follows.
\end{proof}

\begin{Defn} \label{def part col basis} Let $\boj = \jr\in I(n,r)$ and set $x = e_\boj = e_{j_1}\otimes\cdots\otimes e_{j_r}\in \cE^r$. Let $k$ be the number of distinct indices occuring in $x$. Then obviously $k\leq\min(n,r)$. 

For $1\leq i\leq n$ we let $\first_x(i)$ be the smallest position $s$  at which $e_i$ occurs in $x$ and set  $\first_x(i) = 0$, if there is no such position. That is
\begin{equation}\label{first}
\first_x(i) = \first_\boj(i) = 
\begin{cases}
0  & \text{if $i\not=j_s$ for $1\leq s\leq r$}\\
s & \text{if  $i=j_s$ and $i\not=j_t$ for $1\leq t< s$}.
\end{cases}
\end{equation}
 Define the set partition $\mP_x = \pik$ as follows: The nonnegative integers $1\leq s,t \leq n$ belong to the same block $\pi_i, (1\leq i\leq k)$ of $\mP_x$ if and only if $j_s = j_t$. Define the coloring $\boc(x) = \ck$ of $\mP_x$ setting $c_i = j_s$ for $s\in\pi_i$. Thus $j_s$ is the color of $\pi_i$. Obviously $e_\boj\mapsto \mP^{\boc(e_\boj)}$ sets up an bijection between  $\cE^r$ and the set of colored set partitions of $\setr$ into at most $\min(n,r)$ parts. 
\hfill$\square$\end{Defn}

\begin{Example} \label{first ex} We take $r=8$ and $n=7$. We illustrate a set partition by a graph with vertex set $\setr$, whose connected components are the blocks $\pi_i$. So for $\boj = (3,6,3,1,1,3,1,3)$ we have $e_\boj  =  e_3\otimes  e_6\otimes  e_3\otimes  e_1\otimes   e_1\otimes  e_3\otimes  e_1\otimes  e_3$. Thus $k=3$ and $\mP^{\boc(e_\boj)} = (\pi_1, \pi_2, \pi_3)$ with $\pi_1 =\{1,3,6,8\}$ of color $3$, $\pi_2 = \{2\}$ of color $6$ and $\pi_3 = \{4,5,7\})$ of color $1$.  The corresponding graph is given as

\begin{center}
\begin{tikzpicture} 

\coordinate[label=below:$1$](1)at(0,0); 
\coordinate[label=below:$2$](2)at(1,0); 
\coordinate[label=below:$3$](3)at(2,0); 
\coordinate[label=below:$4$](4)at(3,0); 
\coordinate[label=below:$5$](5)at(4,0); 
\coordinate[label=below:$6$](6)at(5,0); 
\coordinate[label=below:$7$](7)at(6,0); 
\coordinate[label=below:$8$](8)at(7,0); 

\draw[-,thick](1)to[out=45,in=135](3);
\draw[-,thick](3)to[out=45,in=135](6);
\draw[-,thick](6)to[out=45,in=135](8);
\draw[-,thick](1)to[out=45,in=135](3);
\draw[-,thick](4)to[out=45,in=135](5);
\draw[-,thick](5)to[out=45,in=135](7);

\fill(1)circle(2pt); 
\fill(2)circle(2pt);
\fill(3)circle(2pt); 
\fill(4)circle(2pt); 
\fill(5)circle(2pt); 
\fill(6)circle(2pt); 
\fill(7)circle(2pt); 
\fill(8)circle(2pt);  

\end{tikzpicture}

\end{center}
Moreover $\first_\boj(2) = \first_\boj(4) = \first_\boj(5) = \first_\boj(7) = 0$ and $\first_\boj(1)=4, \first_\boj(3)=1$ and $\first_\boj(6) = 2$.
\hfill${\square}$\end{Example}

The action of $w\in\fS_n$ on $x=e_\boj\in\cE^r$ preserves the set partition $\mP_x$ attached to $x$ and changes only (transitively) the coloring  $\boc = \boc(x)$ of it. Thus the set partion $\mP_x$ is an invariant of the $\fS_n$-orbit $\cO_x$ of $x$ under the action of $\fS_n$ and we my write $\mP_\cO$ for $\cO=\cO_x$. Since $\fS_n$ acts transitively on $\cC(\mP_x)$ we have shown:

\begin{Prop}\label{orbits} Let $x\in\cE^r$ and let $\mP = \mP(x)$. Then  $\mP = \mP_{wx}$ for all $w\in\fS_n$ and the $\fS_n$-orbit $\cO_x = \fS_nx$ of $x$ is in bijection with the collection of all colorings of $\mP$ given by $wx\mapsto \boc(wx) = w\boc(x)$. Thus the set of all set partitions $\mP$ of $\setr$ into at most $\min(n,r)$ many parts labels the $\fS_n$-orbits of $\cE^r$. Hence the number  of these orbits is given by as the sum of the Stirling numbers $s(r,k)$ for $1\leq k\leq \min(n,r)$ of the second kind.  
\hfill${\square}$\end{Prop}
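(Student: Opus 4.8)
The plan is to verify the three assertions in turn, since the essential structural input---transitivity of $\fS_n$ on colorings---has already been isolated in Lemma \ref{action colors}. For the first assertion, that $\mP_x=\mP_{wx}$ for all $w\in\fS_n$, I would observe that the set partition $\mP_x$ records nothing more than the equality pattern of the indices of $\boj$: positions $\nu,\mu$ lie in a common block precisely when $j_\nu=j_\mu$. Since $w$ is a bijection of $\setn$, we have $w(j_\nu)=w(j_\mu)$ if and only if $j_\nu=j_\mu$, so the equality pattern of $w(\boj)$ coincides with that of $\boj$ and hence $\mP_{wx}=\mP_x$. What $w$ does alter is the coloring, replacing each color $c_i=j_\nu$ by $w(c_i)=w(j_\nu)$, which is exactly the action $\boc\mapsto w\boc$ of Lemma \ref{action colors}; thus $\boc(wx)=w\boc(x)$.

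For the second assertion I would produce the asserted bijection between the orbit $\cO_x$ and the collection $\cC(\mP)$ of colorings explicitly via $wx\mapsto w\boc(x)$. Well-definedness and injectivity follow at once from the bijection $e_\boj\mapsto\mP^{\boc(e_\boj)}$ of Definition \ref{def part col basis}: two orbit elements $wx$ and $w'x$ carry the same underlying set partition $\mP$, so they are equal if and only if their colorings $w\boc(x)$ and $w'\boc(x)$ agree. Surjectivity onto $\cC(\mP)$ is precisely the transitivity statement $\{w\boc\mid w\in\fS_n\}=\cC$ of Lemma \ref{action colors}. This establishes the claimed bijection.

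The third assertion then follows by combining the first two. By the first assertion the set partition $\mP_\cO$ is a well-defined invariant of each orbit $\cO$, and distinct set partitions yield disjoint orbits; conversely every set partition $\mP$ of $\setr$ into at most $\min(n,r)$ blocks occurs, since any coloring of $\mP$ corresponds under Definition \ref{def part col basis} to a basis vector of $\cE^r$ whose orbit has invariant $\mP$. Hence the orbits of $\fS_n$ on $\cE^r$ are labelled bijectively by such set partitions. Finally, the number of set partitions of $\setr$ into exactly $k$ nonempty blocks is the Stirling number $s(r,k)$ of the second kind, and summing over the admissible range $1\leq k\leq\min(n,r)$ yields the stated count.

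The argument is essentially bookkeeping once Lemma \ref{action colors} is available, so I do not anticipate a serious obstacle; the one point requiring genuine care is the well-definedness of the orbit-to-colorings map, which should not be taken for granted but read off from the bijectivity in Definition \ref{def part col basis} together with the invariance of the underlying set partition established in the first step. The role of the bound $\min(n,r)$ is likewise worth flagging explicitly: the inequality $k\leq r$ is automatic, whereas $k\leq n$ is exactly what guarantees, via Definition \ref{defsetpart}, that the $k$ blocks can receive pairwise distinct colors, so that $\cC(\mP)$ is nonempty in the first place.
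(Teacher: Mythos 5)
Your proposal is correct and follows essentially the same route as the paper: the paper's (very terse) proof likewise rests on the invariance of the equality pattern $\mP_x$ under letter permutations, the transitivity of $\fS_n$ on colorings from Lemma \ref{action colors}, and the bijection $e_\boj\mapsto\mP^{\boc(e_\boj)}$ of Definition \ref{def part col basis}. Your write-up merely makes explicit (injectivity of the orbit-to-coloring map, the role of the bound $k\leq n$) what the paper leaves implicit.
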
 

Next we inspect the permutation module attached to $\fS_n$-orbits on $\cE^r$. Thus let $\cO\subseteq\cE^r$ be such an orbit and choose $x\in \cO$. Let $\mP= \mP_\cO = \pik$ be the corresponding set partition in $k$ parts say. Thus in particular $k\leq n$. Define the partition $\la = \la(\cO) = (\la_1,\ldots,\la_{k+1})$ setting $\la_1 = n-k$ and $\la_i = 1$ for $i=2,\ldots,k+1$. We write $\la = (n-k , 1^k)$ for short. Note that $\la$ is a so called \textbf{hook partition} of $n$. 

\begin{Defn}\label{bold s of x} Let $x\in\cE^r$ with colored partition $\mP^{\boc(x)} = \pik$ and $\boc(x) = \ck$ as in \ref{def part col basis} above. 
We define the  $\la$-tableau $\fs = \fs(x)$ as follows: In the first row of $\fs$ we put the numbers $1\leq s \leq n$ which are not contained in $\{c_1,\ldots,c_k\}$ in increasing order. Then we insert the numbers $c_1,c_2,\ldots,c_k$ into rows $n-k+1, \ldots,n$ in that order.
\hfill${\square}$\end{Defn}

 By construction $\fs$ is row standard. Conversely \ref{action colors} implies that for any row standard $\la$-tableau $\t$   there exist $x\in\cO$ such that $\t = \fs(x)$. We have shown:

\begin{Prop}\label{orbitbij} The map $\cO\to\fR(\la):x\mapsto\fs(x)$ is a bijection.
\hfill${\square}$\end{Prop}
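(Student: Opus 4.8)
The plan is to factor the assignment $x\mapsto\fs(x)$ through the set $\cC(\mP)$ of colorings of the underlying set partition $\mP=\mP_\cO$, using the bijection between $\cO$ and $\cC(\mP)$ already established in Proposition~\ref{orbits}. First I would record that the map does land in $\fR(\la)$: by construction (Definition~\ref{bold s of x}) the first row of $\fs(x)$ lists the elements of $\setn\setminus\{c_1,\ldots,c_k\}$ in increasing order while each remaining row is a single box, so every row increases from left to right and $\fs(x)$ is row standard. Moreover the shape $\la=(n-k,1^k)$ is an invariant of $\cO$, since by Proposition~\ref{orbits} the number $k$ of blocks of $\mP$ depends only on $\cO$; thus $\fs$ genuinely maps $\cO$ into $\fR(\la)$.

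The key observation is that $\fs(x)$ is determined by, and in turn determines, the coloring $\boc(x)=(c_1,\ldots,c_k)$. Reading the single-box rows of $\fs(x)$ from top to bottom recovers $c_1,c_2,\ldots,c_k$ in order, and once these colors are known the first row is forced to be the unique increasing arrangement of the complementary set $\setn\setminus\{c_1,\ldots,c_k\}$. To make this precise I would introduce the map $\Phi\colon\fR(\la)\to\cC(\mP)$ sending a row standard tableau $\t$ to the tuple of its single-box entries, taken in order; these are $k$ distinct elements of $\setn$ because the entries of a tableau are distinct, so $\Phi(\t)$ is a genuine coloring in the sense of Definition~\ref{defsetpart}. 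Row standardness of $\t$ guarantees that its first row is the increasing arrangement of the remaining $n-k$ symbols, so $\t$ is completely reconstructed from $\Phi(\t)$.

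It then remains to verify that $\fs$ and the composite of $\Phi$ with the inverse of $x\mapsto\boc(x)$ are mutually inverse. Starting from $x$ one has $\Phi(\fs(x))=\boc(x)$ straight from the definition of $\fs$; starting from $\t$, the element $x'\in\cO$ with $\boc(x')=\Phi(\t)$ has $\fs(x')$ agreeing with $\t$ on the single-box entries and, by row standardness, on the first row, so $\fs(x')=\t$. Equivalently one may split the argument as surjectivity, which is exactly the remark preceding the proposition (every row standard $\t$ equals some $\fs(x)$ by Lemma~\ref{action colors}), together with injectivity, which holds because $\fs(x)=\fs(x')$ forces $\boc(x)=\boc(x')$ and hence $x=x'$ by Proposition~\ref{orbits}. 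There is no serious obstacle beyond the bookkeeping; the one point meriting care is the claim that row standardness renders the first row redundant, so that the entire correspondence is carried by the single-box entries alone. As a consistency check, both sides have cardinality $n(n-1)\cdots(n-k+1)=n!/(n-k)!$, the number of injections $\{1,\ldots,k\}\hookrightarrow\setn$.
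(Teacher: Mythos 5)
Your proof is correct and follows essentially the same route as the paper: row standardness holds by construction, and everything else rests on the orbit--coloring bijection of Proposition \ref{orbits} together with the transitivity in Lemma \ref{action colors}. The paper's own argument is just the terse remark preceding the proposition (well-definedness plus surjectivity); your explicit factorization through $\cC(\mP)$ simply spells out the injectivity half that the paper leaves implicit.
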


\begin{Example}\label{sec ex} For the $x\in\cE^8$ in \ref{first ex} we have $\la = (n-3,1^3) = (4,1^3) \vdash 7$ and 
$$
\fs(x) \,\, = \,\,\begin{matrix}2&4&5&7\\3&&&\\6&&&\\1&&&\end{matrix}
$$
\hfill${\square}$\end{Example}

\begin{Notation} For rings $S\subseteq T$ we denote the induction and restriction functors by 
$$
\Ind_S^T:\leftindex_S {\Mod}\to\leftindex_T{\Mod} \text{\,\,and\,\,}\Res^T_S:\leftindex_T{\Mod}\to\leftindex _S{\Mod}\text{\,\, respectively}.
$$
\hfill${\square}$\end{Notation}

Let $\cO$ again be a $\fS_n$-orbit of $\cE^r$ and let $x\in\cO$. Let the associated hook partition be $\la = (n-k,1^k)$, where $k$ is the number of different indices occuring in the pure tensor $x\in\cE^r$. Let $H = \Stab_{\fS_n}(x) = \{w\in\fS_n\,|\,wx=x\}$ be the {\bf stabilizer} of $x$ in $\fS_n$. By general theory the permutation module $R\cO$ is isomorphic to the trivial $RH$-module $\leftindex_H{R}$ induced up to $G$. By \ref{orbitbij} this is isomorphic to the $R\fS_n$-permutation module arising from the action of $\fS_n$ on the set $\fR(\la)$ of row standard $\la$-tableaus  by letter permutations followed by reordering the entries in the rows (here indeed only row $1$) in increasing order. Moreover choosing $x\in\cO$ such that $\fs(x) = \t^\la$ is the initial $\la$-tableau we obtain  that  $\Stab_{\fS_n}(x) = \Stab_{\fS_n}(\t^\la)$ is the standard Young subgroup $Y_\la \cong \fS_m = \fS_{\{1,\ldots,m\}}$ of $\fS_n$. For $\la\comp n$ and 
$Y_\la\leq\fS_n$ we denote the permutation module $\Ind_{Y_\la}^{\fS_n}(\leftindex_{Y_\la}{R})$ on the left cosets of $Y_\la$ in $\fS_n$ by $M^\la$.    

We summarize the results so far in the following theorem, (compare [\cite{ben halv}, section 3]):

\begin{Theorem}\label{Tensorspacedec} The tensor space $\rV$ decomposes as $\fS_n$-module into the direct sum 
$$
\rV = \bigoplus_{k=1}^{\min(n,r)} s(r,k) M^{(n-k,1^k)},
$$
where $s(r,k)$ is the Stirling number of the second kind.
\hfill${\square}$\end{Theorem}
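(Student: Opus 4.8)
The plan is to assemble the pieces already established in the preliminaries, since the theorem merely records the orbit decomposition of the permutation module $\rV$. First I would use that $\rV$ is the permutation $R\fS_n$-module on the basis $\cE^r$: writing $\cE^r$ as the disjoint union of its $\fS_n$-orbits, one obtains a direct sum decomposition $\rV = \bigoplus_\cO R\cO$ as $R\fS_n$-modules, where $\cO$ ranges over all $\fS_n$-orbits of $\cE^r$.

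Next I would identify each orbit summand with an explicit permutation module $M^\la$. Fixing an orbit $\cO$, let $\mP_\cO = \pik$ be its associated set partition with $k$ blocks, so that $1\le k\le\min(n,r)$ and $\la = (n-k,1^k)$. Choosing the representative $x\in\cO$ for which $\fs(x) = \t^\la$ is the initial $\la$-tableau, the stabilizer $\Stab_{\fS_n}(x)$ equals the standard Young subgroup $Y_\la$, as noted just before the statement. By the orbit-stabilizer theorem $R\cO \cong \Ind_{Y_\la}^{\fS_n}(\leftindex_{Y_\la}{R}) = M^{(n-k,1^k)}$, and Proposition \ref{orbitbij} confirms that this summand depends only on the number $k$ of blocks.

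Finally I would group the summands by $k$ and count. By Proposition \ref{orbits} the orbits are in bijection with set partitions of $\setr$ into at most $\min(n,r)$ parts, and the number with exactly $k$ blocks is the Stirling number of the second kind $s(r,k)$. Collecting together the $s(r,k)$ isomorphic copies of $M^{(n-k,1^k)}$ for each $k$ gives
$$
\rV = \bigoplus_{k=1}^{\min(n,r)} s(r,k)\, M^{(n-k,1^k)},
$$
as claimed.

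Since the orbit decomposition, the stabilizer computation, and the Stirling count are all in hand, no serious obstacle remains. The only point demanding a little care is checking that the representative $x$ with $\fs(x)=\t^\la$ has stabilizer exactly $Y_\la$: this holds because $w$ fixes $x = e_\boj$ precisely when $w$ fixes pointwise the set of colors $\{c_1,\ldots,c_k\}$ occurring in $\boj$, and for this representative those colors are exactly $\{n-k+1,\ldots,n\}$, whose pointwise stabilizer is $\fS_{\{1,\ldots,n-k\}} = Y_\la$.
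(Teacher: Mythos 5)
Your proposal is correct and follows essentially the same route as the paper: decompose $\rV$ into orbit modules $R\cO$, identify each $R\cO$ with $M^{(n-k,1^k)}$ via the representative $x$ with $\fs(x)=\t^\la$ and stabilizer $\Stab_{\fS_n}(x)=Y_\la$, and count orbits with exactly $k$ blocks by the Stirling numbers $s(r,k)$ using Proposition \ref{orbits}. Your explicit verification that the stabilizer is exactly $Y_\la$ (pointwise fixing of the colors $\{n-k+1,\ldots,n\}$) is precisely the point the paper asserts just before the theorem, so nothing is missing.
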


In the next section we shall use \ref{Tensorspacedec} to construct an analogous, $q$-deformed action of the Iwahori-Hecke algebra $\cH_n = \HRn$ on $\rV$.

\section{The action of $\cH_n$ on $\rV$  by $q$-deformed letter permutations} 

Our main player in this section is the Iwahori-Hecke algebra $\cH = \cH_n = \HRn$ defined as $q$-deformation of the group algebra $R\fS_n$. We first collect some basic facts on $\cH$. Note that whenever we take $q$ to $1$ we obtain corresponding results for the group algebra $R\fS_n$. For details the reader is referred to [\cite{dj1}] or the lecture notes [\cite{mathas 1}] of A.  Mathas.

Denote by $S$ the set of simple transpositions $s_i = [i,i+1]\in\fS_n,\,i=1,\ldots,n-1$. Then $\fS_n$ is generated by $S$. Thus $w\in\fS_n$ can be written as product with factors in $S$. We call such an expression for $w$ {\bf reduced}, if the number of factors from $S$ is minimal and then this number is called {\bf length} of $w$ and denoted by $\lt(w)$. Indeed all minimal expressions for $w\in\fS_n$ have the same number of factors from $S$ and hence the length function $\lt:\fS_n\to\Z_{\geq 0}$ is well defined. 

Now let $q\in R$ be a unit.

\begin{Defn}\label{Iwahori-Heckealg} The {\bf Iwahori-Hecke algebra} $\cH = \HRn$ is generated as $R$-algebra with identity by $T_1,\ldots,T_{n-1}$ subject to the following relations:
$$
\begin{array}{lccll}
\text{i)} & T_iT_j &=& T_jT_i& \text{for}\,1\leq i,j\leq n-1, \, |i-j|\geq 2\\
\text{ii)} & T_iT_ {i+1}T_i&=& T_{i+1}T_iT_{i+1}& \text{for}\, 1\leq i\leq n-2\\
\text{iii)} & T_iT_i & = & q+ (q-1)T_i & \text{for}\, 1\leq i\leq n-1.
\end{array}
$$
\hfill${\square}$\end{Defn} 

For $1\leq i\leq n-1$ and $s_i = [i,i+1]\in S$ we write also $T_{s_i}=T_i$ .  If $w = t_1t_2\cdots t_k$ with $t_j\in S$ is a reduced expression for $w\in\fS_n$ we define 
$T_w = T_{t_1}T_{t_2}\cdots T_{t_k}$ .  Then $T_w$ does not depend on the choice of a reduced expression for $w$ and $\HRn$ is free as $R$-module with basis $\{T_w\,|\,w\in\fS_n\}$. Furthermore if $v,w\in\fS_n$ with $\lt(vw)=\lt(v)+\lt(w)$,  then $T_vT_w = T_{vw}$. 

\begin{Lemma}\label{inverses}
The generators  $T_s, s\in S$,  are invertible with inverse
$$ 
T_s^{-1} = (q^{-1}-1) + q^{-1}T_s.
$$
Furthermore let $w = t_1t_2\cdots t_k\in\fS_n$ be a reduced expression. Then $T_w$ is invertible with inverse
$$
 T_w^{-1} = T_{t_k}^{-1}T_{t_{k-1}}^{-1}\cdots T_{t_1}^{-1}.
$$
\hfill${\square}$ \end{Lemma}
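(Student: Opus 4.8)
The plan is to verify the stated formula for $T_s^{-1}$ directly from the quadratic relation, and then extend to $T_w^{-1}$ using the multiplicativity of the basis elements $T_w$ along reduced expressions. For the first claim, I would simply compute the product $T_s\bigl((q^{-1}-1)+q^{-1}T_s\bigr)$ and check it equals the identity. Expanding, this is $(q^{-1}-1)T_s + q^{-1}T_s^2$, and substituting the quadratic relation \ref{Iwahori-Heckealg}(iii), namely $T_s^2 = q + (q-1)T_s$, gives $(q^{-1}-1)T_s + q^{-1}\bigl(q+(q-1)T_s\bigr) = (q^{-1}-1)T_s + 1 + (1-q^{-1})T_s = 1$. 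The terms in $T_s$ cancel since $(q^{-1}-1)+(1-q^{-1})=0$, leaving the identity. Computing the product in the other order yields the same cancellation, so the displayed element is a genuine two-sided inverse. Here I use that $q$ is a unit in $R$, so that $q^{-1}$ makes sense.

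For the second claim, I would argue that whenever $v,w\in\fS_n$ satisfy $\lt(vw)=\lt(v)+\lt(w)$ we have $T_vT_w=T_{vw}$, a fact already recorded in the excerpt. Given a reduced expression $w = t_1t_2\cdots t_k$ with each $t_j\in S$, the lengths add along the product, so $T_w = T_{t_1}T_{t_2}\cdots T_{t_k}$. Taking inverses of a product reverses the order of factors, hence $T_w^{-1} = T_{t_k}^{-1}\cdots T_{t_2}^{-1}T_{t_1}^{-1}$, where each $T_{t_j}^{-1}$ is given by the first part of the lemma. This shows the inverse exists and has the asserted form.

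The phrase calling this a \emph{reduced expression} for $T_w^{-1}$ deserves a word. Since the map $w\mapsto w^{-1}$ is a length-preserving bijection of $\fS_n$, the word $t_k t_{k-1}\cdots t_1$ is a reduced expression for $w^{-1}$, so $T_{w^{-1}} = T_{t_k}\cdots T_{t_1}$; replacing each $T_{t_j}$ by its inverse $T_{t_j}^{-1}$ gives the claimed product, which is then the natural reduced factorization of $(T_w)^{-1}$ compatible with the length of $w^{-1}$. I expect no real obstacle here: the entire statement is a routine consequence of the quadratic relation together with the already-established multiplicativity $T_vT_w=T_{vw}$ under additivity of length. The only point requiring care is the order reversal when inverting a product and the observation that $q^{-1}$ is available because $q$ is assumed to be a unit.
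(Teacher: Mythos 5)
Your proof is correct. The paper itself states this lemma without proof, as one of the standard facts on $\cH$ for which it refers the reader to Dipper--James and Mathas' lecture notes, and your argument --- checking via the quadratic relation that $(q^{-1}-1)+q^{-1}T_s$ is a two-sided inverse of $T_s$, then using $T_vT_w=T_{vw}$ when lengths add to write $T_w=T_{t_1}\cdots T_{t_k}$ and reversing the order upon inversion --- is precisely the routine verification those references give; your remark that $t_kt_{k-1}\cdots t_1$ is a reduced expression for $w^{-1}$ (inversion preserves length) is the right way to make sense of the lemma's slightly loose phrase ``reduced expression for $T_w^{-1}$''.
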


Recall that for $w\in\fS_n$ and $s\in S$ we have $\lt(sw) = \lt(w)\pm1$ and similarly  $\lt(ws) = \lt(w)\pm1$, depending of $w$ having a reduced expression beginning  (ending respectively)  with $s$ for the negative or not for the positive sign. From this one obtains immediately:

\begin{equation}
T_sT_w = 
\begin{cases} 
T_{sw} &\quad\text{if}\, \lt(sw)=\lt(w)+1\\
qT_{sw} + (q-1)T_w & \quad\text{if}\, \lt(sw)=\lt(w)-1.
\end{cases}
\end{equation}\hfill${\square}$

Now let $\la = \lak$ be a composition of $n$. Then the subalgebra $\cH_\la = \cH_{R,q}(Y_\la)$ of $\cH$ generated by $\{T_s\,|\,s\in S\cap Y_\la\}$  is free as $R$-module with basis $\{T_w\,|\,w\in Y_\la\}$. Extending the definition of Iwahori-Hecke algebras to direct products of symmetric groups, $\cH_\la$ is the Iwahori-Hecke algebra $\cH_{R,q}(Y_\la)$ associated with the standard Young subgroup $Y_\la$ of $\fS_n$. In the special case of hook partitions  $\la = (k,1^{n-k})\vdash n$ with $1\leq k\leq n$ we write $Y_\la = \fS_{\{1,\ldots,k\}} = \fS_k\leq \fS_n$.

\begin{Lemma}\label{trivalt} Let $\la\comp n$. We set
$$
x_\la = \sum_{w\in Y_\la} T_w\in\cH_\la.
$$
Then $T_wx_\la = q^{\lt(w)}x_\la = x_\la T_w$ for all $w\in Y_\la$. Thus  $\cH_\la x_\la=x_\la\cH_\la = Rx_\la$ is a left and a right $\cH_\la$-module,  free as $R$-module of $R$-rank $1$, and  is a  $q$-deformation of  the trivial $RY_\la$-module. We hence define the $\HRn$-module
$$
 M^\la_q = \HRn x_\la
$$
and call $M^\la_q$ {\bf $q$-permutation module} to $\la\comp n$. Observe that $M^\la_q = \Ind_{\cH_\la}^\cH (Rx_\la)$ is a $q$-analogue of the permutation module $M^\la$ of $R\fS_n$ on the cosets of $Y_\la$ in $\fS_n$.
\hfill${\square}$\end{Lemma}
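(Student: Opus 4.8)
The plan is to first pin down the central identity $T_w x_\la = q^{\lt(w)} x_\la = x_\la T_w$ for $w \in Y_\la$, since every other assertion of the lemma follows from it formally. The key reduction is that it suffices to treat the generators. Because $Y_\la$ is a standard Young (parabolic) subgroup, every reduced expression $w = s_{i_1}\cdots s_{i_k}$ of an element $w \in Y_\la$ uses only simple transpositions from $S \cap Y_\la$ (a standard feature of parabolic subgroups of Coxeter groups), and for such a reduced word one has $T_w = T_{s_{i_1}}\cdots T_{s_{i_k}}$ with each factor lying in $\cH_\la$. Hence, once we know $T_s x_\la = q x_\la$ for every $s \in S \cap Y_\la$, repeated application from the right yields $T_w x_\la = q^{k} x_\la = q^{\lt(w)} x_\la$ by induction on $k = \lt(w)$.

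For the generator step, fix $s \in S \cap Y_\la$. Left multiplication by $s$ is a fixed-point-free involution on the finite group $Y_\la$, so $Y_\la$ is the disjoint union of two-element orbits $\{w, sw\}$, which we normalise so that $\lt(sw) = \lt(w) + 1$. Applying the multiplication rule displayed after Lemma \ref{inverses} (namely $T_s T_w = T_{sw}$ when $\lt(sw) = \lt(w)+1$, and $T_s T_w = q T_{sw} + (q-1) T_w$ when $\lt(sw) = \lt(w) - 1$) to both members of such a pair gives $T_s T_w = T_{sw}$ and $T_s T_{sw} = q T_w + (q-1) T_{sw}$, whose sum is $q(T_w + T_{sw})$. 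Summing over all pairs produces exactly $T_s x_\la = q x_\la$, as needed. The right-hand identity $x_\la T_w = q^{\lt(w)} x_\la$ then follows either by the mirror computation using the right-multiplication form of the same rule, or more cleanly by applying the $R$-linear anti-automorphism of $\cH$ fixing each $T_s$ (so that $T_w \mapsto T_{w^{-1}}$ and $x_\la \mapsto x_\la$, as $Y_\la$ is inverse-closed and lengths are preserved) to the already-established left identity for $w^{-1}$.

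Given the identity, $R x_\la$ is visibly a left (and right) $\cH_\la$-submodule of $\cH_\la$ on which $T_w$ acts by the scalar $q^{\lt(w)}$; it is $R$-free of rank one because $x_\la = \sum_{w\in Y_\la} T_w$ is a nonzero element of the free $R$-module $\cH_\la$ and $R$ is a domain, so $r x_\la = 0$ forces $r = 0$. Setting $q = 1$ turns each $q^{\lt(w)}$ into $1$, recovering the trivial $RY_\la$-module and justifying the term $q$-deformation. For the identification $M^\la_q = \cH x_\la \cong \Ind_{\cH_\la}^\cH(R x_\la)$, I would use that $\cH$ is free as a right $\cH_\la$-module on the minimal-length left coset representatives $\{T_d\}$ of $Y_\la$ in $\fS_n$, so that $\cH \otimes_{\cH_\la} R x_\la$ has $R$-basis $\{T_d \otimes x_\la\}$ and the natural surjection $h \otimes x_\la \mapsto h x_\la$ carries it to $\{T_d x_\la\}$. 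Since $\lt(dw) = \lt(d) + \lt(w)$ for a minimal representative $d$ and $w \in Y_\la$, one has $T_d x_\la = \sum_{w \in Y_\la} T_{dw}$, a sum of distinct basis elements indexed by the coset $d Y_\la$; distinct $d$ give disjoint cosets, so the elements $T_d x_\la$ have pairwise disjoint support in the basis $\{T_w\}$ and are automatically $R$-linearly independent, forcing the surjection to be an isomorphism.

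I expect no genuine obstacle here: the whole lemma is standard Hecke-algebra bookkeeping. The one step demanding real care is the pairing computation of the second paragraph, where the coefficients $q$ and $q-1$ must be tracked exactly so that the two contributions combine to $q(T_w + T_{sw})$; everything else is formal once the parabolic structure of $Y_\la$ and the freeness of $\cH$ over $\cH_\la$ are invoked.
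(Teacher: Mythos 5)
Your proof is correct and complete. Note that the paper itself gives no argument for this lemma: it is stated as a known fact, with the reader referred to the standard references of Dipper--James and Mathas for details. What you have written out is precisely the standard argument one finds there: the pairing of $Y_\la$ into cosets $\{w,sw\}$ with $\lt(sw)=\lt(w)+1$ to get $T_s x_\la = qx_\la$, the reduction to generators via the fact that reduced words for elements of a parabolic subgroup stay inside $S\cap Y_\la$, the anti-automorphism $T_w\mapsto T_{w^{-1}}$ for the right-hand identity, and the disjoint-support argument on the basis $\{T_w\}$ (equivalently, part 2.) of the paper's Facts \ref{q-permmods}, $\lt(dw)=\lt(d)+\lt(w)$ for distinguished $d$) to identify $\cH x_\la$ with $\Ind_{\cH_\la}^{\cH}(Rx_\la)$. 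The one computation requiring care, the coefficient bookkeeping $T_{sw} + qT_w + (q-1)T_{sw} = q(T_w + T_{sw})$, is handled exactly right, so your write-up can stand as a self-contained proof of the lemma.
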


\begin{Remark}\label{y sub la}
There is also a $q$-analogue for the alternating module for symmetric groups: Setting 
$$
y_\la = \sum_{w\in Y_\la} (-q)^{-\lt(w)}T_w\in\cH_\la ,
$$
we have $T_wy_\la = y_\la T_w = (-1)^{\lt(w)}y_\la$ for all $w\in Y_\la$. The $1$-dimensional space $Ry_\la$ is the desired $q$-analogue for the alternating $RY_\la$-module. Note that $\cH x_\tau, \,x_\tau\cH$ and $\cH \,y_\tau, y_\tau\cH$ with $\tau = (n)\comp n$ are the only  $\cH$-modules, which are $R$-free  as $R$-modules of rank $1$ . 
\hfill${\square}$\end{Remark}

Next we collect some basic facts on $M^\la_q$. Again for details we refer to [\cite{dj1}] and [\cite{mathas 1}].

\begin{Facts}\label{q-permmods} Let $\la\comp n$ and let $\fs\in \fR(\la)$ be a row standard $\la$-tableau. By \ref{d of s} there is a unique $d(\fs)\in\fS_n$ such that $d(\fs)\t^\la = \fs$, where again $\t^\la$ is the initial $\la$-tableau. We have:
\begin{enumerate}
\item[1.)] The set $\cD_\la = \{d(\fs)\,|\,\fs\in\fR(\la)\}$ is a set of left and  $\cD_\la^{-1} = \{d^{-1}\,|\,d\in\cD_\la\}$ is a set of  right coset representatives of $Y_\la$ in $\fS_n$. These coset representatives are called {\bf distinguished}.
\item[2.)] Let $d\in\cD_\la$ and $w\in Y_\la$. Then $\lt(dw) = \lt(d)+\lt(w)$ and hence $T_dT_w=T_{dw}$. Thus $d$ is the unique element of minimal length in its left coset $dY_\la$. A similar statements holds for $d^{-1}$ and the right coset $Y_\la d^{-1}$. Moreover $d\in\cD_\la$ and $s\in S$ such that $\lt(sd)=\lt(d)-1$ implies $sd\in\cD_\la$ (see e.g. [\cite{dj1}, 4.1]).
\item[3.)] The Iwahori-Hecke algebra $\cH = \HRn$ is free as right (left) $\cH_\la$-module with basis $\{T_d\,|\,d\in\cD_\la\}$ (respectively 
$\{T_d\,|\,d\in \cD_\la^{-1}\}$).
\item[4.)] The $q$-permutation module $M_q^\la = \Ind_{\cH_\la}^\cH (Rx_\la)$ has basis $\{T_d x_\la\,|\,d\in\cD_\la\}$. Let $s = s_i = [i,i+1]\in S$ and $d\in\cD_{\la}$. Set $\fs = d\t^\la$.  Then $\fs\in\fR(\la)$ and  (see e.g.  [\cite{dj1},3.2])
$$
T_sT_d x_\la = \begin{cases}
qT_dx_\la & \quad \text{if $ i$ and $i+1$  belong to the same row of $\fs$}\\
T_{sd}x_\la & \quad \text{if the row index of $i$ in $\fs$ is less than that of $i+1$}\\
qT_{sd}x_\la + (q-1)T_dx_\la & \quad \text{otherwise}.
\end{cases}
$$
\item[5.)]  Now choose a further $\mu \comp n$. Then $\cD_{\mu,\la} = \cD_\mu^{-1}\cap\cD_\la$ is a set of $Y_\mu$-$Y_\la$ double coset representatives. Again $d\in\cD_{\mu,\la}$ is the unique shortest element in the double coset $Y_\mu d Y_\la$ and hence is called {\bf distinguished} as well. Moreover $ d^{-1}Y_\mu d\cap Y_\la$ is a standard Young subgroup $Y_\tau$ of $Y_\la$ and of $\fS_n$ for some $\tau\comp n$.  Henceforth we write $\tau = \mu d\cap\la$. As a consequence the subalgebra 
$$
\cH^{T_d}_\mu\cap \cH_\la = T_d^{-1}\cH_\mu T_d\cap \cH_\la
$$ 
of $\HRn$ is the subalgebra $\cH_\tau$ associated with the composition $\tau = \mu d\cap\la$ of $n$ (see e.g. [\cite{dj1},1.6]).
\item[6.)] Let $M\in\leftindex_{\cH_\mu}{\Mod}$. Then we have a Mackey decomposition theorem with respect to standard Young subalgebras, (see e.g. [\cite{dj1},2.7]):
\begin{equation}\label{Mackey}
\Res_{\cH_\la}^\cH\Ind_{\cH_\mu}^\cH (M)\cong \bigoplus_{d\in\cD_{\mu,\la}} \Ind_{\cH_{\mu d\cap \la}}^{\cH_\la}(\Res^{\cH_\mu}_{\cH_{\la d^{-1}\cap \mu}}M)^{T_d},
\end{equation}
where $-^{T_d}$ means that we twist the action of  $\cH_{\la d^{-1}\cap \mu}$ on $M$ by conjugation by $T_d$ to obtain an operation on $M$ by 
$ \cH_{\mu d\cap\la}$.
\item[7.)] Furthermore by [\cite{dj1},3.4] the set $\{\varphi_d\,|\, d\in\cD_{\mu,\la}\}$ is an $R$-basis of $\Hom_\cH(M_q^\mu, M_q^\la)$, where 
\begin{equation}\label{qSchurbasis}
\varphi_d:M_q^\mu\to M_q^\la: x_\mu\mapsto \sum_{d'\in\cD_{\mu\cap\la d^{-1}}\cap Y_\mu} T_{d'd}x_\la = \sum_{w\in Y_\mu d Y_\la}T_w.
\end{equation}
 \end{enumerate}
Observe that the combinatorial tools in all these constructions above are independent of the choice of the unit $q$ in $R$, and hence in particular the data derived from those as for instance dimensions  coincide for the group algebras of the symmetric groups and the associated Iwahori-Hecke algebras. In particular the $\Hom$-spaces between  $q$-permutation modules of the form $M_q^\la, \,\la\comp n$ allow base change, that is change of the ring $R$ of coefficients.
\hfill${\square}$\end{Facts}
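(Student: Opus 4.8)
The plan is to recognize that every item in \ref{q-permmods} is governed by two ingredients: the Coxeter combinatorics of the length function $\lt$ on $\fS_n$, and the fact that the structure constants of $\cH$ with respect to the basis $\{T_w\}$ lie in $\Z[q,q^{-1}]$. I would establish the purely combinatorial statements first. Items (1.)--(3.) are independent of $q$: distinguishedness of $\cD_\la$ and $\cD_\la^{-1}$ follows from the standard fact that each coset $wY_\la$ (resp.\ $Y_\la w$) contains a unique element of minimal length, and that such an element $d$ satisfies $\lt(dw)=\lt(d)+\lt(w)$ for $w\in Y_\la$ by the exchange/deletion condition; freeness of $\cH$ over $\cH_\la$ with basis $\cD_\la$ then drops out of the identity $T_dT_w=T_{dw}$ together with the $\{T_w\}$-basis of $\cH$.

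For item (4.) I would compute $T_sT_dx_\la$ directly from the defining quadratic relation iii) and the case distinction for $T_sT_w$ recorded after \ref{inverses}. Writing $\fs=d\t^\la$, the three cases correspond precisely to whether $sd$ is again distinguished and to the relative row positions of $i,i+1$ in $\fs$: if they lie in the same row then $sd\in dY_\la$ and $T_sT_dx_\la=qT_dx_\la$ via $T_sx_\la=qx_\la$; if the row of $i$ is smaller then $\lt(sd)=\lt(d)+1$ and $sd\in\cD_\la$, giving $T_{sd}x_\la$; otherwise $\lt(sd)=\lt(d)-1$ and the quadratic relation produces the mixed term.

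The technical heart is items (5.)--(7.). For the Mackey decomposition I would decompose $\cH$ as an $\cH_\la$-$\cH_\mu$ bimodule along the double cosets $Y_\mu dY_\la$, $d\in\cD_{\mu,\la}$, using that $d$ is the unique shortest element of its double coset and that $d^{-1}Y_\mu d\cap Y_\la=Y_\tau$ is a standard Young subgroup; applying $\Ind\Res$ to each summand and conjugating by $T_d$ yields the claimed sum. For item (7.) I would check that each $\varphi_d$ is a well-defined $\cH$-homomorphism by verifying that its value on $x_\mu$ is annihilated by the same relations that annihilate $x_\mu$, then prove linear independence by comparing leading $T_w$-terms along distinct double cosets, and finally spanning by a dimension count matching $\dim\Hom_\cH(M_q^\mu,M_q^\la)$ with $|\cD_{\mu,\la}|$.

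Finally the concluding observation follows by a base-change argument. Since $\cD_\la$, $\cD_{\mu,\la}$, the formula for $\tau=\mu d\cap\la$, and the expansions \eqref{qSchurbasis} are all defined over $\Z[q,q^{-1}]$ and indexed $q$-independently, tensoring with any $R$ commutes with $\Ind$, $\Res$ and the formation of the Hom-basis $\{\varphi_d\}$; in particular the ranks are constant and agree with the specialization $q=1$, i.e.\ with $R\fS_n$. The main obstacle I anticipate is the spanning half of (7.): showing that the $\varphi_d$ exhaust $\Hom_\cH(M_q^\mu,M_q^\la)$ rather than merely sit inside it, which requires the integral freeness of the Hom-space (so that generic linear independence forces equality of ranks under any specialization) together with careful bookkeeping of the $T_d$-conjugation that identifies $\cH_\tau$ inside $\cH_\la$.
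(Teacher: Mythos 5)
First, a remark on the comparison itself: the paper offers no proof of these Facts --- it records them with pointers to [\cite{dj1}] and [\cite{mathas 1}] --- so your attempt is really measured against the standard arguments in those sources. Your outline for items (1.)--(6.) reproduces that standard route faithfully: deletion/exchange combinatorics for the distinguished representatives and the length-additivity $\lt(dw)=\lt(d)+\lt(w)$, freeness of $\cH$ over $\cH_\la$ from $T_dT_w=T_{dw}$, the three-case computation for (4.), and the decomposition of $\cH$ as an $\cH_\mu$-$\cH_\la$-bimodule along the double cosets $Y_\mu dY_\la$ for the Mackey formula. One small slip in (4.): when $i$ and $i+1$ lie in the same row of $\fs=d\t^\la$, the relation you need is not $T_sx_\la=qx_\la$ (in general $s\notin Y_\la$), but $T_vx_\la=qx_\la$ for $v=d^{-1}sd$; this $v$ is a \emph{simple} transposition of $Y_\la$ precisely because $\fs$ is row standard, which forces $i$ and $i+1$ to be adjacent in their row, and then $T_sT_dx_\la=T_{dv}x_\la=T_dT_vx_\la=qT_dx_\la$.

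The genuine gap is the spanning half of (7.), exactly the point you flag as the main obstacle, and the repair you propose does not work. Equality of ranks does not give equality of modules: $2\Z\subset\Z$ is a free submodule of full rank which is proper, and nothing in a ``generic independence plus specialization'' argument rules out that the $R$-span of the $\varphi_d$ is a proper full-rank submodule of $\Hom_\cH(M_q^\mu,M_q^\la)$. Moreover, the base-change compatibility of these Hom-spaces that you invoke is itself the \emph{concluding assertion} of the Facts, and in [\cite{dj1}] it is deduced \emph{from} the explicit basis $\{\varphi_d\}$, so using it here is circular; note also that the relevant ring map $\Z[q,q^{-1}]\to R$, $q\mapsto$ a unit, is not flat (e.g. $q\mapsto 1\in\F_p$), so kernels of matrices do not automatically commute with this base change. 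The argument of [\cite{dj1},3.4] avoids all of this and is direct: by Frobenius reciprocity, $\Hom_\cH(\Ind_{\cH_\mu}^\cH(Rx_\mu),M_q^\la)$ is identified with the set of $m\in M_q^\la$ satisfying $T_wm=q^{\lt(w)}m$ for all $w\in Y_\mu$. Writing $m=\sum_{e\in\cD_\la}r_eT_ex_\la$ and imposing $T_sm=qm$ for $s\in S\cap Y_\mu$, the formulas of (4.) show that whenever $e$ and $se$ both lie in $\cD_\la$ one gets $r_{se}=r_e$ (here $q$ being a unit is used), and iterating over simple reflections of $Y_\mu$ connects all representatives inside a fixed double coset $Y_\mu dY_\la$; hence the coefficients $r_e$ are constant on double cosets, and each double coset contributes exactly the element $\sum_{w\in Y_\mu dY_\la}T_w$. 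This coefficient analysis is valid over any commutative ring in which $q$ is invertible, yields linear independence and spanning simultaneously, and is what then \emph{proves} the base-change remark rather than assuming it.
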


In \ref{Tensorspacedec} we have seen that tensor space $\rV$ is isomorphic to  a direct sum of permutation modules of the form $M^\la$ for some hook partitions $\la$, which in turn are isomorphic as $R$-modules to $M_q^\la$ by part $4$.) of \ref{q-permmods}. Thus

\begin{equation}\label{R isom tensorspace}
\rV \cong \bigoplus_{k=1}^{\min(n,r)} s(r,k) M^{(n-k,1^k)}_q
\end{equation}
as $R$-module. We may use this $R$-isomorphism to transfer the $\HRn$-action on the right hand side module in \ref{R isom tensorspace} to $\rV$. To carry this out it suffices to determine the action of the generators $T_i, i=1,\ldots,n-1$ of $\cH=\HRn$ on the basis $e_\boj\in\cE^r$ for $\boj\in I(n,r)$.  Thus let $\boj = \jr \in I(n,r)$ and $x = e_\boj =  e_{j_1}\otimes\cdots\otimes e_{j_r}\in \cE^r$. Recall the definitions of the colored set partition $\mP^{\boc(x)} = \pik$ in \ref{def part col basis} with coloring $\boc(x) = \ck$ and of $\fs = \fs(x)$ in \ref{bold s of x}. Moreover recall the definition of $\first_x(i) = \first_\boj(i)$ in \ref{first}. Note that $\fs$ is a Young tableau to some hook partition and hence the only row containing possibly more than one entry is the first one.  Now part 4.) of  \ref{q-permmods}  translates immediately into:

\begin{Lemma}\label{tensor diagram} Let $1\leq i\leq n-1$. Then 
\begin{enumerate}
\item[i)] $i$ and $i+1$ belong to the same row of $\fs$ if and only if $\first_x(i) = \first_x(i+1)=0$. 
\item[ii)] The row index of $i$ in $\fs$ is less than that of $i+1$ if and only if $\first_x(i) < \first_x(i+1)$.
\item[iii)] The row index of $i$ in $\fs$ is greater than that of $i+1$ if and only if $\first_x(i) > \first_x(i+1)$.

\end{enumerate}
\hfill${\square}$\end{Lemma}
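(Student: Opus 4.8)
The plan is to read off the three row-index conditions for $\fs=\fs(x)$ directly from the explicit description of $\fs$ in Definition~\ref{bold s of x}, translating each into an arithmetic condition on the integers $\first_x(i)$ of \eqref{first}. Writing $\row_\fs(m)$ for the row index of $m\in\setn$ in $\fs$, the starting point is that $m$ occurs in $x$ exactly when $\first_x(m)>0$, and that the numbers occurring in $x$ are precisely the colors $c_1,\ldots,c_k$. Hence $\fs$ places each non-color (each $m$ with $\first_x(m)=0$) into its first row, while the color $c_a$ occupies a singleton row, the singleton rows being filled by $c_1,c_2,\ldots,c_k$ in that order so that $\row_\fs(c_a)$ increases strictly with $a$. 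Since $\la=(n-k,1^k)$ is a hook partition, the first row is the only row of $\fs$ that can hold more than one entry.

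Before the case analysis I would isolate two facts. First, the colors are monotone with respect to first occurrence: by the ordering convention of Definition~\ref{defsetpart} the blocks $\pi_1,\ldots,\pi_k$ are listed so that their smallest places satisfy $b_1<\cdots<b_k$, and by Definition~\ref{def part col basis} the smallest place of the block $\pi_a$ of color $c_a$ equals $\first_x(c_a)$; therefore
$$
\first_x(c_1)<\first_x(c_2)<\cdots<\first_x(c_k).
$$
Combined with the previous paragraph this says that, on colors, both $\row_\fs(c_a)$ and $\first_x(c_a)$ are strictly increasing in $a$, hence induce the same order. Second, $\first_x(i)=\first_x(i+1)$ can hold only when both vanish, since two distinct values $i\neq i+1$ cannot first appear at a common position: each place $\nu$ carries the single value $j_\nu$.

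With this dictionary, (i) is immediate, because $i$ and $i+1$ can share a row only by lying together in the first row, which happens exactly when both are non-colors, i.e.\ $\first_x(i)=\first_x(i+1)=0$. For (ii) I would split into cases according to whether each of $i,i+1$ is a color. When not both are colors, the comparison is settled by the fact that non-colors lie in row $1$ while colors lie in rows of index $\ge 2$, together with the fact that a non-color has $\first=0$ whereas a color has $\first>0$. When both are colors, the equivalence ``$\row_\fs(i)<\row_\fs(i+1)$'' $\Longleftrightarrow$ ``$\first_x(i)<\first_x(i+1)$'' is exactly the monotonicity recorded above. Finally (iii) needs no separate argument: since $\first_x(i)=\first_x(i+1)$ forces both to be $0$, the three conditions on the $\first$-values and the three conditions on the row indices are each mutually exclusive and exhaustive, so (iii) follows from (i) and (ii) by trichotomy.

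The whole argument is bookkeeping once the dictionary is in place; the single load-bearing step is the monotonicity $\first_x(c_1)<\cdots<\first_x(c_k)$, that is, the verification that the block-ordering convention of Definition~\ref{defsetpart} genuinely encodes the order of first occurrence of the colors. I expect this to be the only point needing care, with everything else reducing to the comparison of row $1$ against the singleton rows.
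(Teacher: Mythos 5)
Your proof is correct and takes essentially the same approach as the paper, which states Lemma \ref{tensor diagram} without proof as an immediate consequence of Definitions \ref{defsetpart}, \ref{def part col basis} and \ref{bold s of x}: non-colors fill the first row of the hook tableau, the colors $c_1,\ldots,c_k$ occupy the singleton rows in order, and the block-ordering convention forces $\first_x(c_1)<\cdots<\first_x(c_k)$. You have simply written out in full the dictionary (including the trichotomy argument for part iii)) that the paper treats as self-evident.
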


Note that in case i) above necessarily $\first_\boj(i) = \first_\boj(i+1) = 0$. Now the following theorem follows immediately from \ref{q-permmods}  and \ref{tensor diagram}:

\begin{Theorem}\label{Hecke on tensor space} The Iwahori-Hecke algebra $\cH = \HRn$ acts on tensor space $\rV$, where the action of the generator $T_i$ of $\cH$ for $1\leq i\leq n-1$ on 
$x = e_\boj \in \cE^r$ for $\boj\in I(n,r)$ is given as
$$
T_i x = 
\begin{cases}
qx & \text{if} \first_x(i) = \first_x(i+1) =0.  \\                                        
y & \text{if}  \first_x(i) < \first_x(i+1) \\
qy + (q-1)x & \text{if}  \first_x(i) >\first_x(i+1),
\end{cases}
$$
where $y=  e_{[i,i+1]\boj}\in \cE^r$. Moreover 
$$
\rV \cong \bigoplus_{k=1}^{\min(n,r)} s(r,k) M_q^{(n-k,1^k)}\,\,\text{as $\cH$-module,}
$$
where $s(r,k)$ is the Stirling number of the second kind.
\hfill${\square}$\end{Theorem}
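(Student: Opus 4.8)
The plan is to make the $R$-linear identification \eqref{R isom tensorspace} completely explicit on basis elements and then simply transport the $\cH$-action across it. Concretely, I would define an $R$-isomorphism $\Phi\colon\rV\to\bigoplus_{k=1}^{\min(n,r)} s(r,k)\,M_q^{(n-k,1^k)}$ as follows: given $x = e_\boj\in\cE^r$ with $k$ distinct indices, let $\mP_x$ be its set partition, $\la = (n-k,1^k)$ the associated hook partition, $\fs = \fs(x)\in\fR(\la)$ the row standard tableau of \ref{bold s of x}, and $d = d(\fs)\in\cD_\la$ the distinguished coset representative with $d\t^\la = \fs$; then put $\Phi(x) = T_d x_\la$, placed in the copy of $M_q^{\la}$ indexed by $\mP_x$. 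By \ref{orbitbij} the assignment $x\mapsto\fs(x)$ is a bijection onto $\fR(\la)$, and by part $4$ of \ref{q-permmods} the elements $\{T_d x_\la\mid d\in\cD_\la\}$ form an $R$-basis of $M_q^\la$; hence $\Phi$ carries the basis $\cE^r$ bijectively onto the union of these bases and is an $R$-isomorphism realising \eqref{R isom tensorspace}. Setting $T_i x := \Phi^{-1}(T_i\,\Phi(x))$ then endows $\rV$ with an $\cH$-action for which $\Phi$ is, by construction, an isomorphism of $\cH$-modules; this already yields the \emph{Moreover} part.

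It then remains to compute $T_i\,\Phi(x) = T_{s_i}T_d x_\la$ and read off the three cases. Here I would invoke part $4$ of \ref{q-permmods} verbatim, which expresses $T_{s_i}T_d x_\la$ as $q T_d x_\la$, as $T_{s_i d}x_\la$, or as $q T_{s_i d}x_\la + (q-1)T_d x_\la$ according to whether $i$ and $i+1$ lie in the same row of $\fs$, the row index of $i$ is smaller than that of $i+1$, or larger. Lemma \ref{tensor diagram} translates these three tableau conditions into the conditions $\first_x(i)=\first_x(i+1)=0$, $\first_x(i)<\first_x(i+1)$, and $\first_x(i)>\first_x(i+1)$ appearing in the statement, so the trichotomy matches.

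The one genuine point to verify — and the step I expect to be the main obstacle — is the identification $\Phi^{-1}(T_{s_i d}x_\la) = y := [i,i+1]x = e_{[i,i+1]\boj}$, i.e. that left multiplication by $T_{s_i}$ corresponds on the tensor side to the letter transposition $s_i = [i,i+1]$. By \ref{orbits} the element $y=s_i x$ lies in the same orbit as $x$ with the same set partition $\mP_x$, and by \ref{action colors} its coloring is $\boc(y) = s_i\boc(x)$, so $y$ lands in the same summand indexed by $\mP_x$. I would then check, by a short case analysis according to whether each of $i,i+1$ occurs among the colors $c_1,\dots,c_k$ of $x$, that passing from $\boc(x)$ to $s_i\boc(x)$ merely interchanges the entries $i$ and $i+1$ in the tableau $\fs(x)$ without disturbing row standardness (the only possible reordering, in the first row, does not arise precisely because $i$ and $i+1$ are adjacent). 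This gives $\fs(y) = s_i\,\fs(x)$, hence $d(\fs(y)) = s_i d\in\cD_\la$ and $\Phi(y) = T_{s_i d}x_\la$, as required. In the same-row case one has in addition $s_i\boc(x) = \boc(x)$, so $y = x$ and the formula $T_i x = qx = qy$ is consistent. Substituting these identifications into the three cases of part $4$ of \ref{q-permmods} yields exactly the displayed formula for $T_i x$, while the $\cH$-module decomposition is \eqref{R isom tensorspace} read through the now $\cH$-equivariant $\Phi$.
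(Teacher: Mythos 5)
Your proposal is correct and follows essentially the same route as the paper: identify each basis tensor $x=e_\boj$ with the basis element $T_{d(\fs(x))}x_\la$ of the appropriate copy of $M_q^{(n-k,1^k)}$, transport the $\cH$-action across this $R$-isomorphism (equation \ref{R isom tensorspace}), and read off the three cases from part 4 of \ref{q-permmods} combined with Lemma \ref{tensor diagram}. The only difference is one of detail: the verification that $\fs([i,i+1]x)=[i,i+1]\fs(x)$ (so that $T_{s_id}x_\la$ indeed corresponds to $y=e_{[i,i+1]\boj}$), which you rightly single out and check by cases, is exactly the step the paper compresses into ``follows immediately.''
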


\begin{Remarks}\label{relationcheckremark} \begin{enumerate}
\item[1.)] Note that the action of $\HRn$ on $\rV$ in \ref{Hecke on tensor space} is indeed a $q$-deformation of the permutation operation of the symmetric group $\fS_n$ on tensors in $\cE^r$ by letter permuations. In fact, the latter is recovered by taking $q$ to $1$ in  \ref{Hecke on tensor space}. As mentioned in the introduction, there is a different $q$-deformation of the action of the symmetric group $\fS_r$ on $\rV$ by place  permutations due to Jimbo [\cite{jimbo}].
\item[2.)] One can prove theorem \ref{Hecke on tensor space} directly, by checking, that the action of $T_i$ on $\cE^r$ as described there satisfies the relations  i), ii) and iii) of the definition \ref{Iwahori-Heckealg}.
\item[3.)] Note that the action of the symmetric group on tensor space utilizes the fact, that the group algebra $R\fS_n$ has a Hopf structure with coproduct yielding a diagonal embedding of the group  into the direct product of $r$ copies of $\fS_n$. There is no Hopf algebra structure on $\cH$ yielding  the action of $\cH$ on tensor space here.\hfill${\square}$
\end{enumerate}
\end{Remarks}



\section{The $q$-partition algebra}

 In the early 1990s partition algebras $P_r(Q)$   were introduced by Martin [\cite{martin 1},\cite{martin 2},\cite{martin 3}] and Jones [\cite{jones 1}] in their work on Temperley-Lieb algebra and the Potts model in statistical mechanic. These algebras are defined over arbitrary commutative rings $R$  in diagrammatical form, having a basis of set-partition diagrams and multiplication given by diagram concatenation involving the parameter $Q$. For $Q=n = \dim_R(V),$ the partition algebra $P_r(n)$ acts on tensor space $\rV$ centralizing the diagonal action of $\fS_n$ on $\rV$ by letter permutations. 
We have a surjective algebra homomorphism from $P_r(n)$ to $\End_{R\fS_n}(\rV)$, which in addition turns out to be an isomorphism for $n\geq 2r$, (see e.g. [\cite{halv jacobson},2.5]).

Obviously theorem \ref{Hecke on tensor space} yields  a $q$-deformation of the endomorphism ring $\End_{R\fS_n}\rV$, by defining:

\begin{Defn}\label{q-partition algebra} Let again $R$ a commutative ring, $ n,r\in\N$ and let $q\in R$ be a unit. Then define the {\bf$q$-partition algebra} $\part$ by
$$
 \part = \End_\cH(\rV) \,\,\text{where again\,} \cH =  \HRn.
$$
\hfill${\square}$\end{Defn}  

Of course, this definition is stretching the notion of partition algebras in the case of $n<2r$, but is correct otherwise. In  [\cite{halv thiem}],  Halverson and Thiem defined their $q$-partition algebras $Q_r(n,q)$ for $R=\C$ and prime powers $q$  by different means. These come with the same restriction, their construction provides only a true $q$-analogue of classical partition algebras $P_r(n)$ if $n\geq 2r$.  Recall that our definition works for any commutative ring $R$ and unit $q\in R$. The goal in this section is to extend the Halverson Thiem construction to any field $R$ of characteristic not dividing $q$ and to some special integral domains $R$ and to  prove, that for prime powers $q$ both versions over $R$ of  the $q$-partition algebra are isomorphic. In order to establish this isomorphism  we need to investigate the construction of Halverson and Thiem more in detail.

The starting point is an observation of Halverson and Ram in [\cite{halv ram}] that tensor space $\rV$ can also  be constructed by iterated restriction to and induction from $\fS_{n-1}$  of the trivial $R\fS_n$-module. Indeed this is  the special case $q=1$ of the corresponding $q$-analogue  for the Iwahori-Hecke algebra $\cH = \cH_n = \HRn$ as we shall show next. 

To simplify notation we write
 \begin{equation}\label{inductionrestriction}
\IR_{n-1}^n = \Ind_{\cH_{\la}}^{\cH}\Res_{\cH_{\la}}^{\cH},
\end{equation}
where $\la = (n-1,1)\vdash n$. Note that then $\fS_{n-1}\cong Y_\la\leq \fS_n$.
With $\tau= (n)\vdash n$ we have $Y_\tau = \fS_n$ and hence $Rx_\tau$ is the trivial $\cH$-module.   

\begin{Theorem}\label{indres}  Keep the notation introduced above.   Then: 
$$
\rV \cong (\IR_{n-1}^n)^r (Rx_{(n)}),
$$
as $\HRn$-modules.
\end{Theorem}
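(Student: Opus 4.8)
The plan is to argue by induction on $r$, using Theorem \ref{Hecke on tensor space} to replace both sides by explicit direct sums of hook $q$-permutation modules and then matching multiplicities through the Stirling recursion $s(r+1,k)=k\,s(r,k)+s(r,k-1)$. For $r=0$ both sides are the trivial module $Rx_{(n)}=V^{\otimes 0}$; applying $\IR_{n-1}^n$ once gives $\Ind_{\cH_\la}^\cH\Res_{\cH_\la}^\cH(Rx_{(n)})=\Ind_{\cH_\la}^\cH(Rx_\la)=M_q^{(n-1,1)}$, which is the $r=1$ instance of \ref{Hecke on tensor space} (here $\Res_{\cH_\la}^\cH$ of the trivial $\cH$-module is the trivial $\cH_\la$-module, since every $T_s$ still acts by $q$). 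For the inductive step, since $(\IR_{n-1}^n)^{r+1}(Rx_{(n)})=\IR_{n-1}^n\bigl((\IR_{n-1}^n)^r(Rx_{(n)})\bigr)\cong\IR_{n-1}^n(\rV)$ by the induction hypothesis, it suffices to prove $\IR_{n-1}^n(\rV)\cong V^{\otimes(r+1)}$ as $\cH$-modules.

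The engine of the argument is the following computation of $\IR_{n-1}^n$ on the hook $q$-permutation modules occurring in \ref{Hecke on tensor space}: for $1\le k\le n$,
\[
\IR_{n-1}^n\bigl(M_q^{(n-k,1^k)}\bigr)\;\cong\;k\,M_q^{(n-k,1^k)}\;\oplus\;M_q^{(n-k-1,1^{k+1})},
\]
where the last summand is understood to be absent when $k=n$. Granting this, I would apply the (additive) functor $\IR_{n-1}^n$ to the decomposition of \ref{Hecke on tensor space}: the coefficient of $M_q^{(n-k,1^k)}$ in $\IR_{n-1}^n(\rV)$ then receives $k\,s(r,k)$ from the first summand of the $j=k$ term and $s(r,k-1)$ from the \emph{second} summand of $\IR_{n-1}^n\bigl(M_q^{(n-(k-1),1^{k-1})}\bigr)$, the $j=k-1$ term. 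Their sum is exactly $s(r+1,k)$, so $\IR_{n-1}^n(\rV)\cong\bigoplus_{k}s(r+1,k)M_q^{(n-k,1^k)}\cong V^{\otimes(r+1)}$, closing the induction.

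It remains to establish the key formula, and here I would invoke the Mackey decomposition \eqref{Mackey} with $\mu=(n-k,1^k)$ and $\la=(n-1,1)$, both as compositions of $n$. Writing $M_q^\mu=\Ind_{\cH_\mu}^\cH(Rx_\mu)$, the distinguished double cosets $\cD_{\mu,\la}$ are in bijection with the $Y_\mu$-orbits on $\fS_n/Y_\la\cong\setn$ (via $wY_\la\mapsto w(n)$); since $Y_\mu=\fS_{\{1,\ldots,n-k\}}$ acts transitively on $\{1,\ldots,n-k\}$ and fixes $n-k+1,\ldots,n$, there are exactly $k+1$ of them. For each of the $k$ singleton orbits $\{m\}$, $n-k+1\le m\le n$, the distinguished representative satisfies $\la d^{-1}\cap\mu=\mu$ and $\mu d\cap\la$ of type $(n-k,1^{k-1})$ (a composition of $n-1$), so by part 5.) of \ref{q-permmods} and transitivity of induction the associated Mackey summand induces up to $M_q^{(n-k,1^k)}$; for the single large orbit $\{1,\ldots,n-k\}$ the point $d(n)$ lies in the $(n-k)$-block, the intersections lose one further point, $\mu d\cap\la$ has type $(n-k-1,1^k)$, and the summand contributes $M_q^{(n-k-1,1^{k+1})}$. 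Summing the $k+1$ contributions yields the formula.

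The hard part will be the Mackey bookkeeping of the last paragraph: one must identify, for every distinguished $d$, the standard Young subalgebra $\cH_{\mu d\cap\la}=T_d^{-1}\cH_\mu T_d\cap\cH_\la$ governing the induced piece, and verify that the conjugation $(\,\cdot\,)^{T_d}$ carries the restricted one-dimensional module (on which each $T_s$ acts by $q$) to the $q$-trivial module of $\cH_{\mu d\cap\la}$. Once this invariance of the trivial-module structure is checked, transitivity of induction collapses each summand to a genuine $q$-permutation module $M_q^{(n-j,1^j)}$, and since by the final remark of \ref{q-permmods} the isomorphism types and multiplicities depend only on the underlying partitions and not on $q$, the count above is forced and reduces to the $q=1$ statement of Halverson--Ram. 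Finally the degenerate case $k=n$ (where $M_q^{(0,1^n)}$ is the regular module and the large orbit disappears) must be recorded separately, but it is consistent with the convention $s(r,k)=0$ for $k>\min(n,r)$ used on the other side.
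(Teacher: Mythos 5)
Your proposal is correct and follows essentially the same route as the paper's proof: induction on $r$, the Mackey decomposition \ref{Mackey} applied to $\Res_{\cH_\la}^\cH\Ind_{\cH_\mu}^\cH(Rx_\mu)$ with $\mu=(n-k,1^k)$, identification of the $k+1$ distinguished double cosets and their intersections $\mu d\cap\la$, and the Stirling recursion $s(r+1,k)=k\,s(r,k)+s(r,k-1)$ to match multiplicities. The only cosmetic differences are that you count $\cD_{\mu,\la}$ via $Y_\mu$-orbits on $\fS_n/Y_\la\cong\setn$ where the paper lists the representatives $d_i=s_is_{i+1}\cdots s_{n-1}$ explicitly, and that you induce each Mackey summand all the way back up to $\cH$ at once, while the paper records the intermediate $\cH_\la$-modules $M_q^{\mu d\cap\la}(\la)$ before applying $\Ind_{\cH_\la}^\cH$.
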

\begin{proof} The proofs of this result in [\cite{halv ram},3.19] and [\cite{halv thiem},1.2] in the special case $q=1$ are based  on well known tensor identities, which do not carry over to Iwahori-Hecke algebras. We outline here a proof using Mackey decomposition, which works for $\cH$ as well by part 6.) of \ref{q-permmods}. We proceed by induction on $r$ to show 
\begin{equation}\label{assertion}
(\IR_{n-1}^n)^r (Rx_{(n)}) \cong \bigoplus_{k=1}^{\min(n,r)} s(r,k) M^{(n-k,1^k)}_q
\end{equation}
and then \ref{Hecke on tensor space} implies the assertion immediately. 

Set again $\la = (n-1,1)\vdash n$. For $r=1$ we have  $\IR_{n-1}^n (Rx_{(n)}) =\Ind_{\cH_{\la}}^{\cH}\Res_{\cH_{\la}}^{\cH}(Rx_{(n)})  = M_q^{\la}$ by \ref{trivalt} as desired.


Now let $2\leq r\in\N$ and suppose we have shown the assertion for $r-1$. Thus by induction hypothesis
\begin{equation}\label{indhypo}
\begin{split}
(\IR_{n-1}^n)^r (Rx_{(n)}) &\cong \IR_{n-1}^n(V^{\otimes(r-1)})\\
&\cong \IR_{n-1}^n\left( \bigoplus_{k=1}^{\min(n,r-1)} s(r-1,k) M^{(n-k,1^k)}_q\right)\\
&\cong  \bigoplus_{k=1}^{\min(n,r-1)} s(r-1,k)\IR_{n-1}^n( M^{(n-k,1^k)}_q).\\
\end{split}
\end{equation}

Thus we have to investigate $\IR_{n-1}^n( M^{(n-k,1^k)}_q)$. But
\begin{equation}\label{hyposummand}
\IR_{n-1}^n( M^{(n-k,1^k)}_q) = \Ind_{\cH_\la}^{\cH}\Res_{\cH_\la}^{\cH}\Ind_{\cH_\mu}^{\cH}(Rx_\mu),
\end{equation}
setting $\mu = (n-k,1^k)\vdash n$. We apply Mackey decomposition (see part 6 of \ref{q-permmods}) to obtain:
\begin{equation}\label{mackeyaplohne}
\begin{split}
\Res_{\cH_\la}^{\cH}\Ind_{\cH_\mu}^{\cH} (Rx_\mu)
&\cong\bigoplus_{d\in\cD_{\mu,\la}} \Ind_{\cH_{\mu d\cap \la}}^{\cH_\la}(\Res^{\cH_\mu}_{\cH_{\la d^{-1}\cap \mu}}Rx_\mu)^{T_d}\\
&\cong \bigoplus_{d\in\cD_{\mu,\la}}  \Ind_{\cH_{\mu d\cap \la}}^{\cH_\la}(\Res^{\cH_\mu}_{\cH_{\la d^{-1}\cap \mu}}Rx_\mu)^{T_d}\\
&\cong  \bigoplus_{d\in\cD_{\mu,\la}}\Ind_{\cH_{\mu d\cap \la}}^{\cH_\la}(Rx_{\mu d\cap\la})\\
\end{split}
\end{equation}
since  restriction to standard Young subalgebras and conjugation by $T_d$ takes trivial modules to trivial ones.


 Recall that the set $S$ of basic transpositions $s_i = [i,i+1]$ generates $\fS_n$.  It is not hard to see that 
  $\cD_\la = \{d_1,\ldots,d_{n-1},d_n=1\}$, where  $d_i = s_is_{i+1}\cdots s_{n-1}$ for $i=1,\ldots,n-1$, and that $d_i\in\cD_\mu^{-1}$ if and only if 
$n-k\leq i\leq n$, since $Y_\mu d_j = Y_\mu d_{n-k}$ for $1\leq j\leq n-k-1$. We have shown 
\begin{equation}\label{double cosets}
\cD_{\mu,\la}=  \cD_\mu^{-1}\cap\cD_\la =
\begin{cases}
\{d_{n-k},d_{n-k+1},\ldots,d_{n-1},d_n=1\} & \quad \text{for\,} k<n\\
\{d_{n-k+1},\ldots,d_{n-1},d_n=1\} = \cD_\la & \quad\text{for\,}k=n. 
\end{cases}
\end{equation}
Note that in particular $|\cD_{\mu,\la}|=k+1$ for $k<n$ and  $|\cD_{\mu,\la}|=n$ for $k=n$.  Moreover, for $n-k+1\leq i\leq n-1$ we have $s_iw=ws_i$ for all $w\in Y_\mu$ and hence $Y_\mu^{d_i}\cap Y_\la = Y_\mu\leq Y_\la$. Thus $\mu d\cap\la = \mu$ in this case. Moreover one checks immediately, that $d_{n-k}^{-1}Y_\mu d_{n-k}\cap Y_\la = Y_\tau$ with $\tau = (n-k-1, 1^{k+1})$, that is $\mu d_{n-k}\cap\la = \tau$.

 We write henceforth $M^\mu_q (\la)=  \Ind_{\cH_\mu}^{\cH_\la}(Rx_\mu)$ for any $\mu\comp n$ with $Y_\mu\leq Y_\la$.
Then \ref{mackeyaplohne} becomes
\begin{equation}\label{Mackey 2}
\begin{split}
\Res_{\cH_\la}^{\cH}\Ind_{\cH_\mu}^{\cH} (Rx_\mu)
&\cong  \bigoplus_{d\in\cD_{\mu,\la}}\Ind_{\cH_{\mu d\cap \la}}^{\cH_\la}(Rx_{\mu d\cap\la})\\
& = \bigoplus_{k\,\text{copies}}M_q^{(n-k,1^k)}(\la)\oplus M_q^{(n-k-1,1^{k+1})}(\la)\\
& = kM_q^{(n-k,1^k)}(\la)\oplus M_q^{(n-k-1,1^{k+1})}(\la).\\
\end{split}
\end{equation}

We insert this into equation \ref{indhypo} and apply the identities 
\begin{align*}
s(r,k) &= k\cdot s(r-1,k)+s(r,k-1)\\
s(r,0) &= 0\\
s(r,1) &= 1 
\end{align*}
 for the Stirling numbers of the second kind to obtain:
\begin{equation}\label{Mackey 4}
\begin{split}
\Res_{\cH_\la}^\cH(\IR_{n-1}^n)^{r-1} (Rx_{(n)})
&\cong   \bigoplus_{k=1}^{\min(n,r-1)} s(r-1,k)  \Res_{\cH_\la}^{\cH}\Ind_{\cH_\mu}^{\cH} (Rx_\mu)\\
&\cong \bigoplus_{k=1}^{\min(n,r-1)} s(r-1,k)( kM_q^{(n-k,1^k)}(\la)\oplus M_q^{(n-k-1,1^{k+1})}(\la))\\
&= \bigoplus_{k=1}^{\min(n,r)}(k\cdot  s(r-1,k)+ s(r-1,k-1))M_q^{(n-k,1^k)}(\la)\\
&=\bigoplus_{k=1}^{\min(n,r)}s(r,k)M_q^{(n-k,1^k)}(\la),
\end{split}
\end{equation}
and hence, observing $\Ind_{\cH_\la}^\cH\left(M_q^{(n-k,1^k)}(\la)\right) = M_q^{(n-k,1^k)},$ we obtain

\begin{equation}\label{Mackey 5}
(\IR_{n-1}^n)^r (Rx_{(n)}) =\Ind_{\cH_\la}^\cH\Res_{\cH_\la}^\cH(\IR_{n-1}^n)^{r-1} (Rx_{(n)})=
 \bigoplus_{k=1}^{\min(n,r)}s(r,k)M_q^{(n-k,1^k)}.
\end{equation}
 
Now the theorem follows by induction and \ref{Hecke on tensor space}.
\end{proof}

Next we outline the construction of the $q$-partition algebra $Q_r(n,q)$ defined by Halverson and Thiem. For this, a third player enters the stage, the finite general linear group $G=GL_n(q)$ defined over the field $\F_q$ with $q$ elements, where now $q$ is some  power of some prime $p$. Although in  [\cite{halv thiem}] Halverson and Thiem define their $q$-partition algebra over the field of complex numbers their construction carries over to fields of characteristic $l\geq 0$ not dividing $q$ and even to commuative coefficient rings $R$ satisfying some restrictions  as long as $q\cdot 1_R$ is a unit of $R$. However to avoid ring theoretic subtleties we choose a prime $p$ not dividing $q$ and a split $l$-modular system  $(F,\cO,K)$ for $G$ such that $\cO$ is a rank one complete discrete valuation ring with field of fractions $K$ of characteristic $0$  and residue field $F$ of characteristic $l$ such that $K$ and $F$ are splitting fields for all subgroups of $G$. Moreover let $R$ henceforth be one of the rings $F,\cO,K$.   

We list some basic facts on $G=GL_n(q)$, which we will need later on. The group $G$ is a finite group of Lie type and hence is a group with an $BN$-pair, a fact which we will use. For details on those groups we refer to the standard literature  see e.g. [\cite{carter}] or [\cite{curtis reiner}]. 

 We choose the subgroup of invertible upper triangular matrices in $G$ to be $B$, (a so called {\bf Borel subgroup}). The subgroup $U^+$ of $B$ of upper unitriangular matrices is normal in $B$ and is a $p$-Sylow subgroup of $G$. Subgroups of $G$ which contain $B$ are called {\bf standard parabolic subgroups}. Thus $B\leq P\leq G$ be a parabolic subgroup of $G$. Then $P$ admits a so called {\bf Levi decomposition} $P = L\rtimes U_P$, where $U_P$ is the unipotent radical of $P$ and $L$ is the {\bf Levi complement} and $P$ is the semi direct product of $U_P$ by $L$. Each parabolic subgroup is conjugate to a parabolic subgroup containing the Borel subgroup $B$.  The parabolic subgroups $P\geq B$ are in bijection with compositions of $n$: If $P \leftrightarrow \mu = \muk\comp n$ we write $P = P_\mu$ with Levi decomposition $P_\mu =L_\mu U_\mu$. Here $L_\mu \cong GL_{\mu_1}\times GL_{\mu_2}\times\cdots\times GL_{\mu_k}$ embedded into $GL_n(q)$ in block form down the diagonal. $U_\mu$ consists then of upper unitriangular matrices, whose entries above the diagonal are zero, if they belong to the blocks of $L_\mu$.

\begin{Defn}\label{HC-functors}       
Let $\mu\comp n$. Define  the functor {\bf Harish-Chandra induction} 
$$
\cI_{L_\mu}^G  = \Ind_{P_\mu}^G\circ\Infl_{L_\mu}^{P_\mu}:\leftindex_{RL_\mu}{\Mod} \to \leftindex_{RG}{\Mod}
$$
 where the {\bf inflation functor} $\Infl_{\L_\mu}^{P_\mu}$ turns $RL_\mu$-modules into $RP_\mu$-modules by the natural surjection $P_\mu \to L_\mu$ with kernel $U_\mu$. Moreover define the functor {\bf Harish-Chandra restriction}
$$
\cR_{L_\mu}^G = \Inv_{U_\mu}\circ\Res^G_{P_\mu}:\leftindex_{RG}{\Mod} \to\leftindex_{RL_\mu}{\Mod}
$$
where $\Inv_{U_\mu} (N)$ for $N\in\leftindex_{RP_\mu}{\Mod}$ is defined to be the set of $U_\mu$-invariants in $N$, which obviously is an $RL_\mu$-submodule of $N$. Since $l\not=p$,  the order $|U_\mu|$ of $U_\mu$ is invertible in $\cO$ and $\Inv_{U_\mu}(N)$  can be realised  as $ e_{U_\mu}N$ with $e_{U_\mu} = |U_\mu|^{-1} \sum_{u\in U_\mu}u\in\cO G$.
\hfill${\square}$\end{Defn}

Harish-Chandra induction and restriction play a very prominent role for the representation theory of finite general linear groups and more generally of groups of Lie type.  They are an adjoint pair on both sides and 
 Harish-Chandra inductions (respectively restrictions) are isomorphic functors for compositions $\nu, \mu$ of $n$, if $\mu$  arises from $\nu$ by reordering its parts. As a consequence it suffices to consider parabolic subgroups $P_\mu = L_\mu U_\mu$ with $\mu\vdash n$.  
We collect some basic facts, for the general setting and details we refer e.g. to [\cite{difl1},\cite{difl2}].
 Extending the definition of these functors to direct products of general linear groups we have functors 
\begin{equation}
\cI_{L_\nu}^{L_\mu} : \leftindex_{RL_\nu}{\Mod}\to\leftindex_{RL_\mu}{\Mod} \,\,\text{and}\,\,
\cR_{L_\nu}^{L_\mu}: \leftindex_{RL_\mu}{\Mod}\to\leftindex_{RL_\nu}{\Mod}
\end{equation}
for $\nu,\mu\comp n$ such that $L_\nu\leq L_\mu$. Then these functors satisfy transitivity, that is  given $\tau, \nu,\mu\comp n$ with $L_\tau\leq L_\nu\leq L_\mu$  we have 

\begin{equation}\label{HC trans}
\cI_{L_\nu}^{L_\mu}\circ\cI_{L_\tau}^{L_\nu} \cong \cI_{L_\tau}^{L_\mu}\quad\text{and}\,\, \cR^{L_\nu}_{L_\tau}\circ \cR^{L_\mu}_{L_\nu} \cong \cR^{L_\mu}_{L_\tau}.
\end{equation}

We have a  Mackey decomposition theorem similar to  part 6.) of \ref{q-permmods}, (comp [\cite{didu2}, 1.4]:

\begin{equation}\label{HC Mackey}
\cR_{L_\nu}^G\cI_{L_\mu}^G (N)\cong \bigoplus_{d\in\cD_{\mu,\nu}} \cI_{L_{\mu d\cap \nu}}^{L_\nu}(\cR^{L_\mu}_{L_{\nu d^{-1}\cap \mu}}N)^d\,\,\text{for}\, N\in\leftindex_{RL_\mu}{\Mod}.
\end{equation}
 Here $\fS_n$ is considered as the subgroup of permutation matrices in $G$.

Henceforth fix $\la = (n-1,1)\vdash n$. Then in particular
$$
L_\la = \left\{
\begin{pNiceArray}{cw{c}{1cm}c|c}[margin]
\Block{3-3}<\large>{GL_{n-1}(q)}&&&0 \\
&&&\Vdots\\
&&&0\\
\hline
0 & \Cdots&0&*
\end{pNiceArray}
\in GL_n(q)\right\}
\,\,\text{and}\,\,
U_\la = \left\{
\begin{pNiceArray}{cw{c}{1cm}c|c}[margin]
\Block{3-3}<\large>{E_{n-1}}&&&* \\
&&&\Vdots\\
&&&*\\
\hline
0 & \Cdots&0&1
\end{pNiceArray}
\in GL_n(q)\right\},
$$
where $E_{n-1}$ is the identity matrix of $GL_{n-1}(q)$. 
We now have the tools to define the $q$-partition algebra [\cite{halv thiem}] of Halverson and Thiem:

\begin{Defn}\label{q-part halv thiem} The subgroup $ P_\la$ of $G$ is a maximal parabolic subgroup  containing $B$ and $L_\la\cong GL_{n-1}(q)$. Write $\cIR_{n-1}^n = \cI_{L_\la}^G\circ\cR_{L_\la}^G$. Then the $RG$-module  $\fT^r_{R,q}= \fT^r_q$ is defined to be 
$$
\fT^r_q = \fT^r_{R,q} = (\cIR_{n-1}^n)^r(\leftindex_G{R}),
$$
where $\leftindex_{G}{R}$ denotes the trivial $RG$-module. Then according to Halverson and Thiem the {\bf $q$-partition algebra}  $Q_r(n,q)$ is defined to be the endomorphism algebra $\End_{RG}(\fT^r_q)$.
\hfill${\square}$
\end{Defn}

We will show, that the $q$-partition algebra $Q_r(n,q)$ with $R\in\{F,\cO,K\}$  is indeed isomorphic to $\part$ defined in \ref{q-partition algebra}. But this requires some preparation.

There is a close connection between the representation theory of finite general linear groups in non describing chracteristic and Iwahori Hecke algebras. The key observation, due to Iwahori  [\cite{iwahori}], is the following result:

\begin{Theorem}\label{hecke gln} The endomorphism algebra $\End_{RG}(\Ind_B^G(\leftindex_B{R}))$ of the trivial $RB$-module $\leftindex_B{R}$ induced to $G$ is the Iwahori Hecke algebra  $ \cH_{R,q}( \fS_n)$. Thus, writing $RG$-endomorphisms on the right, we obtain the bimodule $\Ind_B^G(\leftindex_B{R}) \in\leftindex_{RG}{\Mod}_{\cH_R}$.
\hfill${\square}$\end{Theorem}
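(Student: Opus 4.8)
The plan is to realise $\End_{RG}(\Ind_B^G(R_B))$ as the \emph{convolution} (double-coset) algebra of the pair $(G,B)$ and then match it relation-by-relation with $\HRn$ using the $BN$-pair structure of $G=\GL_n(q)$. First I would use that $\Ind_B^G(R_B)\cong R[G/B]$ as a left $RG$-module, on which $G$ acts transitively with base coset $B$ stabilised by $B$. Hence any $\phi\in\End_{RG}(R[G/B])$ is determined by $\phi(B)$, which must lie in the $B$-fixed submodule $R[G/B]^B$; this gives an $R$-module isomorphism $\End_{RG}(R[G/B])\cong R[G/B]^B$, valid over an arbitrary commutative ring $R$. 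A basis of $R[G/B]^B$ is given by the $B$-orbit sums on $G/B$, that is, by the double cosets in $B\backslash G/B$. By the Bruhat decomposition $G=\coprod_{w\in\fS_n}BwB$ (here $\fS_n$ is the Weyl group, realised as the group of permutation matrices), these double cosets are indexed by $\fS_n$. Writing $T_w$ for the endomorphism with $T_w(B)=\sum_{xB\subseteq BwB}xB$, I obtain that $\{T_w\mid w\in\fS_n\}$ is an $R$-basis of $\End_{RG}(\Ind_B^G(R_B))$, so this algebra is free of the same rank $|\fS_n|$ as $\HRn$.

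Next I would establish the multiplication rules for the $T_w$, which is where the $BN$-pair structure enters. For a simple reflection $s=s_i$ and $w\in\fS_n$ the sharp form of the $BN$-pair relations gives $BsB\cdot BwB=BswB$ when $\lt(sw)=\lt(w)+1$ and $BsB\cdot BwB=BswB\cup BwB$ when $\lt(sw)=\lt(w)-1$. Counting, with multiplicity, the cosets $xB$ appearing on each side then yields $T_sT_w=T_{sw}$ in the first case and $T_sT_w=q\,T_{sw}+(q-1)T_w$ in the second, where the parameter is $q=[B:B\cap s_iBs_i]=|BsB/B|$. For $G=\GL_n(q)$ this index equals the order of the root subgroup attached to $s_i$, namely the field size $q$, and is the same for every simple reflection. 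Taking $w=s$ in the length-decreasing case gives the quadratic relation $T_s^2=q+(q-1)T_s$, which is relation (iii) of \ref{Iwahori-Heckealg}. The length-additive rule $T_sT_w=T_{sw}$ forces $T_w=T_{s_{i_1}}\cdots T_{s_{i_k}}$ for any reduced expression $w=s_{i_1}\cdots s_{i_k}$, independently of the chosen word; since $\fS_n$ is a Coxeter group this independence encodes exactly the braid relations (i) and (ii).

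Finally I would assemble the isomorphism. The generators $T_{s_1},\dots,T_{s_{n-1}}$ of $\End_{RG}(\Ind_B^G(R_B))$ satisfy relations (i), (ii), (iii), so there is a well-defined $R$-algebra homomorphism $\HRn\to\End_{RG}(\Ind_B^G(R_B))$ sending the Hecke generator $T_i$ to $T_{s_i}$. It carries the basis $\{T_w\}$ of $\HRn$ to the double-coset basis $\{T_w\}$ constructed above (again using that $T_w$ is independent of the reduced word), hence is an isomorphism of free $R$-modules and therefore of $R$-algebras. Writing $RG$-endomorphisms on the right then makes $\Ind_B^G(R_B)$ an $(RG,\HRn)$-bimodule, as asserted.

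The main obstacle is the second step: proving the \emph{exact} multiplication formula rather than the mere inclusion $BsB\cdot BwB\subseteq BswB\cup BwB$ coming from the $BN$-pair axioms, and correctly counting the structure constants so that they come out as $q$ and $q-1$. This rests on the split $BN$-pair structure of $\GL_n(q)$ and on the computation of the parameter $q=|BsB/B|$; the remaining verifications (basis, relations, rank comparison) are then formal.
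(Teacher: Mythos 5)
Your argument is correct, and it is essentially the proof behind the paper's citation: the paper offers no proof of its own but states this as Iwahori's classical theorem [\cite{iwahori}], whose standard proof is exactly your double-coset convolution argument — identify $\End_{RG}(R[G/B])$ with the $B$-orbit sums $R[G/B]^B$ indexed via the Bruhat decomposition by $\fS_n$, then use the sharp $BN$-pair multiplication $BsB\cdot BwB$ formulas to extract the quadratic relation with parameter $q=[B:B\cap sBs]$ and the braid relations from reduced-word independence. Since the structure constants you compute are integers independent of the coefficient ring, the argument is characteristic-free and covers all choices $R\in\{F,\cO,K\}$ that the paper needs, so your reconstruction matches the cited source.
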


We remark in passing, that this occurence of Iwahori Hecke algebras here is only one of many. Indeed,  attached to each so called geometric conjugacy class of irreducible characters of $G$ there is attached a  specific Iwahori Hecke algebra which is a tensor product of algebras of the form $\cH_{R,q^{i}}$ for certain powers $q^{i}$ of $q$. The one presented in the theorem above is attached to the so called principal series for
 $G$. Note that $B= P_\rho$ for $\rho = (1^n)\vdash n$ with Levi decomposition $B=L_\rho U_\rho$, where $L_\rho = T$ is the subgroup of invertible diagonal matrices in $G$ and $U_\rho = U^+$. Furthermore $\Ind_B^G(\leftindex_B{R}) = \cI_T^G(\leftindex_T{R})$. Henceforth we set $N = N_R =  \cI_T^G(\leftindex_T{R})$ for $R\in\{F,\cO,K\}$. We extend this to standard Levi subgroups $L_\mu$ for arbitrary $\mu\comp n$ observing that $T\leq L_\mu$ and indicate this by a superscript $\mu$. Thus $N_R^\mu = \cI_T^{L_\mu}(\leftindex_T{R})$ is a $RL_\mu$-$\cH_\mu$-bimodule, that is  $N_R^\mu \in\leftindex_{RL_\mu}{\Mod}_{\cH_\mu}$.   

Next we apply theorem \ref{hecke gln} to construct a pair of useful functors between $\leftindex_{RG}{\Mod}$ and $\leftindex_{\cH}{\Mod}$, which at the end will produce the desired result on $q$-partition algebras. 
We start with a minimal projective cover $$
\hat{\beta}: \widehat{Q}\to \leftindex_T{\cO}
$$ 
of the trivial $\cO T$-module $\leftindex_T{\cO}$.
Applying Harish-Chandra induction takes projective covers to projective covers. Thus
$$
\beta :Q\to N_\cO = \cI_T^G(\leftindex_T{\cO})  \,\,\text{with}\,\, \beta = \cI_T^G(\hat{\beta})\,\,\text{and}\,\,Q =  \cI_T^G(\widehat{Q})
$$ 
is a (not necessarily minimal) projective cover of $N_\cO$. Let $\mE = \End_{\cO G}(Q)$. Then by  [\cite{dipper1},2.2]  $\phi(\ker \beta)\subseteq \ker\beta$ for all $\phi\in\mE$ and hence 
$J_\beta = \{\psi\in\mE\,|\,\im\psi\leq\ker\beta\}$ is an ideal in $\mE$. Moreover $\beta$ induces an $\cO$-algebra isomorphism $\mE/J_\beta \to \cH_\cO = \End_{\cO G}(N_\cO)$. For $R\in\{F,K\}$ we have $\mE_R =  R\otimes_\cO\mE_\cO\cong \End_{RG}(Q_R)$ with $Q_R = R\otimes_\cO Q_\cO$. Moreover $\cH_R = R\otimes_\cO\cH_\cO \cong \End_{RG}(N_R)$. 

\begin{Defn}\label{functors} Keeping the notation introduced above we define {\bf Hecke functors} $H$ and $\hH$  as follows:
\begin{align*}
H & = H_R:\leftindex_{RG}{\Mod}\to\leftindex_{\cH_R}{\Mod}: V\to \Hom_{RG}(Q_R,V)/J_\beta\Hom_{RG}(Q_R,V)\\
\hH & =\hH_R:\leftindex_{\cH_R}{\Mod}\to \leftindex_{RG}{\Mod}: X\to    N_R\otimes_{\cH_R}X.\\
\end{align*}
Thus $H_R = \Hom_{RG}(Q_R,-)/J_\beta\Hom_{RG}(Q_R,-)$ and $\hH_R = N_R\otimes_\cH -$.
For $\mu\comp n$ we extend the notion of the Hecke functors to Levi subgroups $L_\mu$, and indicate this by a superscript $\mu$:
$$
H_R^\mu:\leftindex_{RL_\mu}{\Mod}\to \leftindex_{\cH_\mu}{\Mod}\,\,\text{and}\,\,\hH^\mu = N_R^{\mu}\otimes_{\cH_{\mu}}-:\leftindex_{\cH_\mu}{\Mod}\to\leftindex_{RL_\mu}{\Mod},
$$
with $N_R^\mu = \cI_T^{L_\mu}(\leftindex_T{R})$.
\hfill${\square}$\end{Defn}
 In the following most properties work fine under base change with respect to the rings $R=F,\cO$ and $K$,  and we may drop the subscript $R$ unless ambiguities arise. We list now some facts, which will be needed later on,  and give arguments and references for these:

\begin{Facts}\label{Hecke facts} Let $\mu\comp n$. The Hecke functors defined in \ref{functors} satisfy:
\begin{enumerate}
\item[1.)] The functor $H^\mu$ is a left inverse of the functor $\hH^\mu$. Thus for $X\in\leftindex_{\cH_\mu}\Mod$, the $\cH_\mu$-module 
$$
H^\mu\circ\hH^\mu(X) = \Hom_{RL_\mu}(Q^\mu,N^\mu\otimes_{\cH_\mu}X)/
J_{\beta^\mu}^\mu\Hom_{RL_\mu}(Q^\mu,N^\mu\otimes_{\cH_\mu} X)
$$ 
is naturally isomorphic to $X$, by [\cite{dipper1},2.16].
\item[2.)] For left ideals $J$ of  $\cH_\mu$ we have $\hH^\mu(J) =  N^\mu\otimes_{\cH_\mu}J = N^\mu J$. In particular  $N^\mu\otimes_{\cH_\mu} \cH_\mu x\cong N^\mu x$ for $x\in\cH_\mu$, (see [\cite{dipper1},2.20] and [\cite{dipper2},5.3]).
\item[3.)] The the $\cO$-lattice $\hH(\cH_\cO x_\mu) = Nx_\mu$ is a pure sublattice of $N$, that is there exists an
 $\cO$-lattice complement of $ Nx_\mu$ in $N$, (see [\cite{james 2},7.19]). This implies in particular that $ N_Fx_\mu \cong F\otimes_\cO x_\mu N_\cO$ is a reduction modulo $l$ of 
$N_Kx_\mu $.
\item[4.)] Now let $\tau = (n)\comp n$. Then obviously $x_\tau = \sum_{w\in\fS_n}T_w$ and  $\cH x_\tau$ is the trivial $\cH$-module. Now
 $\hH(\cH x_\tau) = \leftindex_G{R}$,  the trivial $RG$-module,  for every choice of $R\in\{F,\cO,K\}$. For $R = K$ this follows from  [\cite{curtis et.al.},4.6]. Since 
$ N_\cO x_\tau$ is pure in $N_\cO$ the claim follows for the other choices of $R= F, \cO$ as well. 
This extends to Levi subgroups, that is $\hH^\mu (\cH_\mu x_\mu) =  \hH^\mu(Rx_\mu) = N^\mu x_\mu = \leftindex_{L_\mu}{R}$ for all choices of $R=F,\cO,K$.

\item[5.)] Let $\mu\comp n$. Then 
$$\cI_{L_\mu}^G(\leftindex_{L_\mu}{R})\cong \cI_{L_\mu}^G(N^\mu x_\mu) 
\cong \cI_\mu^G(\cI_T^{L_\mu}(\leftindex_T{R}))x_\mu 
\cong \cI_T^G(\leftindex_T{R})x_\mu = Nx_\mu
$$ 
for all choices of $R = F,\cO,K$.

 \item[6.)] Let $X,Y$ be left ideals of $\cH$. Then $NX = \hH(X), NY = \hH(Y)$ and the $R$-linear map induced (and denoted) by $H$  
$$
H:\Hom_{RG}(NX,NY)\to \Hom_{\cH}(H\hH(X),H\hH(Y)) = \Hom_{\cH}(X,Y)
$$
is an isomorphism. In particular 
\begin{equation}\label{basiciso}
H:\End_{RG}(NX)\to\End_{\cH}(X)  
\end{equation}
is an $R$-algebra isomorphism, (see [\cite{dipper1},2.21]).

\item[7.)] Harish-Chandra restriction and induction commute with the Hecke functor $\hH$, that is the following diagrams commute:  
\begin{center}
\begin{tikzcd}[row sep = huge]
\leftindex_\cH{\Mod}\arrow[r, "\hH"  ]& \leftindex_{RG}{\Mod}\\
\leftindex_{\cH_{\mu }}{\Mod}\arrow[u,  "\Ind_{\cH_\mu}^\cH" ]\arrow[r,  "\hH^\mu"] & \leftindex_{RL_\mu}{\Mod}\arrow[u,   "\cI_{L_\mu}^G"]
\end{tikzcd}
\hspace {2cm}
\begin{tikzcd}[row sep = large]
\leftindex_\cH{\Mod}\arrow[r, "\hH"  ]\arrow[d,  "\Res_{\cH_\mu}^\cH" ]& \leftindex_{RG}{\Mod}\arrow[d,   "\cR_{L_\mu}^G"]\\
\leftindex_{\cH_\mu}{\Mod}\arrow[r,  "\hH^\mu"] & \leftindex_{RL_\mu}{\Mod}
\end{tikzcd}
\end{center}
\bigskip
Thus, for $X \in\leftindex_{\cH_\mu}{\Mod}$ and $A\in \leftindex_{\cH}{\Mod}$  we have natural isomorphisms
$$
N\otimes_\cH\Ind_{\cH_\mu}^\cH(X)\cong \cI_{L_\mu}^G(N^\mu\otimes_{\cH_\mu}X)\quad\text{and}\quad 
N^\mu\otimes_{\cH_\mu}\Res_{\cH\mu}^\cH(A) \cong \cR_{L_\mu}^G(N\otimes_\cH A)
$$
respectively isomorphisms of functors
\begin{equation}\label{functorisos}
\hH \Ind_{\cH_\mu}^\cH \cong \cI_{L_\mu}^G \hH^\mu \quad \text{and} \quad \hH^\mu \Res^\cH_{\cH_\mu}\cong \cR_{L_\mu}^G\hH.
\end{equation}
For a proof see [\cite{didu},1.3.2]. \hfill${\square}$
\end{enumerate}
\end{Facts}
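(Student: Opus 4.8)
The plan is to deduce each of the two functor isomorphisms in \ref{functorisos} from a single bimodule identity together with an exactness argument, treating the induction square first since it is the easier one.

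For the induction isomorphism $\hH\Ind_{\cH_\mu}^\cH\cong\cI_{L_\mu}^G\hH^\mu$, the starting point is the transitivity of Harish-Chandra induction \ref{HC trans}, which gives
$$
\cI_{L_\mu}^G(N^\mu)=\cI_{L_\mu}^G\bigl(\cI_T^{L_\mu}(R_T)\bigr)\cong\cI_T^G(R_T)=N .
$$
I would first upgrade this to an isomorphism of $(RG,\cH_\mu)$-bimodules, where $\cH_\mu$ acts on $N$ through the subalgebra embedding $\cH_\mu\hookrightarrow\cH$ and on $\cI_{L_\mu}^G(N^\mu)$ by functoriality from the right $\cH_\mu$-action on $N^\mu$; this amounts to checking that the endomorphisms of $N^\mu$ coming from $\cH_\mu=\End_{RL_\mu}(N^\mu)$ are carried by $\cI_{L_\mu}^G$ to the correct endomorphisms of $N$. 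Granting this, for $X\in\leftindex_{\cH_\mu}{\Mod}$ one has $\hH\Ind_{\cH_\mu}^\cH(X)=N\otimes_\cH(\cH\otimes_{\cH_\mu}X)=N\otimes_{\cH_\mu}X$, so the claim becomes $N\otimes_{\cH_\mu}X\cong\cI_{L_\mu}^G(N^\mu\otimes_{\cH_\mu}X)$. Both sides are right-exact functors of $X$ (the left by right-exactness of $\otimes$, the right because $\cI_{L_\mu}^G=\Ind_{P_\mu}^G\circ\Infl$ is exact), and they agree on the free module $X=\cH_\mu$, since there both give $N$. Comparing the two functors on a free presentation $\cH_\mu^{(J)}\to\cH_\mu^{(I)}\to X\to0$ and using that $\cI_{L_\mu}^G$ commutes with direct sums then forces them to agree on all $X$.

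For the restriction isomorphism $\hH^\mu\Res_{\cH_\mu}^\cH\cong\cR_{L_\mu}^G\hH$ I would proceed analogously, but the crux is a second bimodule identity,
$$
\cR_{L_\mu}^G(N)\cong N^\mu\otimes_{\cH_\mu}\cH
$$
as $(RL_\mu,\cH)$-bimodules. Here the right $\cH$-action on the left-hand side comes from the right $\cH$-action on $N$ (which commutes with the $RG$-action and hence survives $\Res_{P_\mu}^G$ and the idempotent $e_{U_\mu}$), while on the right-hand side it is the obvious one. To establish this I would compute $\cR_{L_\mu}^G(N)=\cR_{L_\mu}^G\cI_T^G(R_T)$ by the Harish-Chandra Mackey decomposition \ref{HC Mackey} with $\nu=\mu$ and source Levi $T=L_\rho$, $\rho=(1^n)$; the resulting summands are indexed by the double cosets $\cD_{\rho,\mu}$ and match, summand by summand, the decomposition of $\cH$ as a left $\cH_\mu$-module along the very same double cosets provided by \ref{q-permmods}. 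Since $|U_\mu|$ is invertible, $\cR_{L_\mu}^G=\Inv_{U_\mu}\circ\Res_{P_\mu}^G=e_{U_\mu}\Res_{P_\mu}^G(-)$ is exact and given by tensoring with the bimodule $e_{U_\mu}RG$, so it commutes with $-\otimes_\cH A$; combined with the bimodule identity this yields, for $A\in\leftindex_\cH{\Mod}$,
$$
\cR_{L_\mu}^G(N\otimes_\cH A)\cong\cR_{L_\mu}^G(N)\otimes_\cH A\cong(N^\mu\otimes_{\cH_\mu}\cH)\otimes_\cH A\cong N^\mu\otimes_{\cH_\mu}\Res_{\cH_\mu}^\cH A,
$$
which is exactly $\hH^\mu\Res_{\cH_\mu}^\cH(A)$.

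The main obstacle is the bimodule identity $\cR_{L_\mu}^G(N)\cong N^\mu\otimes_{\cH_\mu}\cH$: both sides are easy to identify as $RL_\mu$-modules via the two Mackey theorems, but the delicate point is that the right $\cH$-module structure---that is, the full Iwahori-Hecke action, and not merely its restriction to $\cH_\mu$---must be matched on the nose. Tracking the Hecke endomorphisms through the $U_\mu$-fixed-point functor, rather than only on the Levi side, is where the real work lies, and it is presumably for this reason that the result is attributed to \cite{didu}. Alternatively, once the induction square is in place, one could try to derive the restriction square from the biadjointness of the Harish-Chandra pair $(\cI_{L_\mu}^G,\cR_{L_\mu}^G)$ and of the Hecke pair $(\Ind_{\cH_\mu}^\cH,\Res_{\cH_\mu}^\cH)$, but making the units and counits natural with respect to $\hH$ again reduces to controlling this same Hecke-equivariance.
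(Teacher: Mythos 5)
You should note first that the paper offers no proof of this block at all: every item, including the commutation of the Hecke functors with Harish--Chandra induction and restriction in 7.), is justified purely by citation (item 7 to [\cite{didu},1.3.2]), so any actual derivation is necessarily a different route from the paper's. Your reduction framework is sound as far as it goes: writing $\hH = N\otimes_\cH -$ and $\hH^\mu = N^\mu\otimes_{\cH_\mu}-$, observing that $\cI_{L_\mu}^G$ is exact and commutes with direct sums, and that $\cR_{L_\mu}^G = e_{U_\mu}RG\otimes_{RG}-$ therefore commutes with $-\otimes_\cH A$, both squares in \ref{functorisos} do follow, by right-exactness and evaluation on a free presentation, from the two bimodule identities you isolate, namely $\cI_{L_\mu}^G(N^\mu)\cong N$ as $(RG,\cH_\mu)$-bimodules and $\cR_{L_\mu}^G(N)\cong N^\mu\otimes_{\cH_\mu}\cH$ as $(RL_\mu,\cH)$-bimodules.

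The genuine gap is that neither bimodule identity is proved, and they are the entire content of the statement. On the induction side you write ``granting this'' for the claim that $\cI_{L_\mu}^G$ carries the action of $\End_{RL_\mu}(N^\mu)\cong\cH_\mu$ to the standard embedding $\cH_\mu\subseteq\cH=\End_{RG}(N)$; this is not a formal consequence of transitivity \ref{HC trans} but a nontrivial compatibility theorem --- it is, in effect, what makes the identification of \ref{hecke gln} consistent across Levi subgroups. On the restriction side your Mackey computation establishes only that $\cR_{L_\mu}^G(N)$ and $N^\mu\otimes_{\cH_\mu}\cH$ are isomorphic as $RL_\mu$-modules, both being direct sums of $[\fS_n:Y_\mu]$ copies of $N^\mu$; but a one-sided isomorphism together with matching multiplicities says nothing about the right $\cH$-action, and matching that action is, as you yourself concede, ``where the real work lies.'' To close it one would have to track the Hecke operators through the fixed-point functor $\Inv_{U_\mu}$ explicitly, for instance on the standard basis of $N=\Ind_B^G(R_B)$ indexed by $B\backslash G/B$ --- which is precisely what [\cite{didu},1.3.2] does. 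As written, then, your argument reformulates the cited result rather than proving it; it is a correct and useful reduction, but not yet a proof. (A minor further point: items 1.)--6.) of the Facts are not addressed at all, though since those are verbatim citations of external results there is little one could do within this paper's framework anyway.)
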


Here are two consequences of \ref{Hecke facts}:

\begin{Cons}\label{folg}
 Let $\mu\comp n$. 

\begin{enumerate}
\item[1.)] The functor $\hH$ takes the permutation type $\cH$-module $M^\mu_q$ to permutation modules on the cosets of $P_\mu$: 
\begin{alignat*}{2}
\hH(M_\mu^q) &\cong\hH(\cH x_\mu) &&\\
&\cong \hH \Ind_{\cH_\mu}^\cH (R x_\mu)&&\quad\text{by \ref{trivalt}}\\
&\cong \cI_{L_\mu}^G\hH^\mu(R x_\mu)&&\quad\text{by \ref{functorisos}}\\
&\cong \cI_{L_\mu}^G(\leftindex_{L_\mu}{R})&&\quad\text{by part 4.) of \ref{Hecke facts}}\\
&\cong\Ind_{P_\mu}^G(\leftindex_{P_\mu}{R})&&\quad\text{by \ref{HC-functors}}
\end{alignat*}
for all choices of $R = F,\cO,K$.
\item[2.)] Applying \ref{functorisos} repeatedly gives
$$
\hH\Ind_{\cH_\mu}^\cH\Res_{\cH_\mu}^\cH \cong \cI_{L_\mu}^G\hH^\mu\Res_{\cH_\mu}^\cH \cong \cI_{L_\mu}^G\cR_{L_\mu}^G\hH,
$$
and hence in particular
\begin{equation}\label{consequ}
\hH ([IR]_{n-1}^n)^r \cong([\cI\cR]_{n-1}^n)^r\hH,
\end{equation} 
and 
\begin{equation}\label{consequplushalf}
\hH^\mu\Res^\cH_{\cH_\mu} ([IR]_{n-1}^n)^r \cong\cR^G_{L_\mu}([\cI\cR]_{n-1}^n)^r\hH,
\end{equation} 

for all choices of $R = F,\cO,K$.\hfill${\square}$
\end{enumerate}
\end{Cons}


We now are in the position to prove our main results:

\begin{Theorem}\label{main 1} Set $G = GL_n(q)$. For any choice of $R\in\{F,\cO,K\}$ we have:
\begin{enumerate}
\item[1.)]  The Hecke functor $\hH$ maps tensor space to $\fT^r_q$, that is $\hH(\rV) = \fT^r_q$.
\item[2.)] The Hecke functor $H$ maps $\fT^r_q$ to tensor space: $H(\fT^r_q) = \rV$.
\item[3.)] The $RG$-module $\fT^r_q$ decomposes  into a direct sum of of permutation modules on the cosets of parabolic subgroups as follows:
\begin{equation}\label{qtensordec}
\begin{split}
\fT^r_{R,q} = \fT^r_q 
& \cong \bigoplus_{k=1}^{\min(n,r)}s(r,k)Nx_{(n-k,1^k)} \\
& \cong  \bigoplus_{k=1}^{\min(n,r)}s(r,k) \cI_{L_{(n-k,1^k)}}^G(R_{L_{(n-k,1^k)}})\\
& \cong \bigoplus_{k=1}^{\min(n,r)}s(r,k) \Ind_{P_{(n-k,1^k)}}^G(R_{P_{(n-k,1^k)}})\\
\end{split}
\end{equation}
where again $s(r,k)$ is the Stirling number of the second kind.
\item[4.)] The Hecke functor $H$ induces an $R$-algebra isomorphism 
$$
H: Q_r(n,q) =\End_{RG}(\fT^r_q)\to\part = \End_\cH(\rV).
 $$
Thus the $q$-partition algebras defined in [\cite{halv thiem}] by Halverson and Thiem and the one defined in \ref{q-partition algebra} coincide.
\end{enumerate}
\end{Theorem}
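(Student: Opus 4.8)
The plan is to reduce all four parts to the structural facts about the Hecke functors $H$ and $\hH$ collected in \ref{Hecke facts} and their consequences in \ref{folg}, using as input the two descriptions of tensor space: the explicit decomposition in \ref{Hecke on tensor space} and the inductive construction in \ref{indres}. The crucial observation is that $\rV$, as an $\cH$-module, is a direct sum of $q$-permutation modules $M_q^{(n-k,1^k)} = \cH x_{(n-k,1^k)}$, each of which is a \emph{left ideal} of $\cH$; this is exactly the setting in which \ref{Hecke facts} applies, and it is what lets us transport the endomorphism algebra back and forth through $\hH$.

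For part 1.) I would start from \ref{indres}, which gives $\rV \cong (\IR_{n-1}^n)^r(Rx_{(n)})$ as $\cH$-modules, and apply $\hH$. By \ref{consequ} the functor $\hH$ intertwines the iterated induction--restriction operator on the Hecke side with the iterated Harish--Chandra operator on the group side, i.e. $\hH(\IR_{n-1}^n)^r \cong (\cIR_{n-1}^n)^r\hH$. Since $Rx_{(n)} = \cH x_{(n)}$ is the trivial $\cH$-module and $\hH(\cH x_{(n)}) = R_G$ by part 4.) of \ref{Hecke facts}, this yields $\hH(\rV) \cong (\cIR_{n-1}^n)^r(R_G) = \fT^r_q$, which is part 1.). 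Part 3.) then follows by instead feeding the decomposition of \ref{Hecke on tensor space} through the additive functor $\hH$: applying $\hH$ termwise to $\rV \cong \bigoplus_k s(r,k)M_q^{(n-k,1^k)}$ and using part 1.) of \ref{folg} together with parts 2.) and 5.) of \ref{Hecke facts} identifies $\hH(M_q^{(n-k,1^k)})$ successively with $Nx_{(n-k,1^k)}$, with $\cI_{L_{(n-k,1^k)}}^G(R_{L_{(n-k,1^k)}})$ and with $\Ind_{P_{(n-k,1^k)}}^G(R_{P_{(n-k,1^k)}})$, giving the three isomorphisms in \ref{qtensordec}.

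Part 2.) is immediate once part 1.) is in hand: $H$ is a left inverse of $\hH$ by part 1.) of \ref{Hecke facts} (taken with $\mu = (n)$, so that $L_\mu = G$), whence $H(\fT^r_q) = H\hH(\rV) \cong \rV$. The real content is part 4.). Here, because $H$ is a functor and $H(\fT^r_q) \cong \rV$, the assignment $\varphi \mapsto H(\varphi)$ is automatically an $R$-algebra \emph{homomorphism} $\End_{RG}(\fT^r_q) \to \End_\cH(\rV)$; compatibility with composition is built into functoriality, so no separate check is needed for the multiplicative structure. It remains to prove bijectivity. For this I would write $\rV \cong \bigoplus_k s(r,k)\cH x_{(n-k,1^k)}$ as a direct sum of left ideals and $\fT^r_q \cong \bigoplus_k s(r,k)Nx_{(n-k,1^k)}$ accordingly, and invoke part 6.) of \ref{Hecke facts}, which says precisely that for left ideals $X, Y$ of $\cH$ the map $H\colon \Hom_{RG}(NX, NY) \to \Hom_\cH(X, Y)$ is an isomorphism. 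Since both sides are additive in $X$ and in $Y$, this blockwise isomorphism assembles, matching up the $s(r,k)$-fold multiplicities, into a bijection $\End_{RG}(\fT^r_q) \cong \End_\cH(\rV)$, completing part 4.) and hence the identification $Q_r(n,q) \cong \part$.

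The step I expect to be the main obstacle is the bijectivity in part 4.): one must be careful that the single-ideal statement of part 6.) of \ref{Hecke facts} really does extend over the direct-sum decomposition with multiplicities, and that the resulting bijection coincides with the algebra homomorphism induced by the functor $H$ rather than with some ad hoc block-by-block gluing. The safest route is to verify that the isomorphism of part 6.) is natural, so that it is compatible with the inclusions and projections of the direct summands of $\rV$ and of $\fT^r_q$; then the matrix description of $\End_{RG}(\fT^r_q)$ and $\End_\cH(\rV)$ in terms of the $\Hom$-blocks between summands is respected entrywise by $H$, and the algebra homomorphism $\varphi \mapsto H(\varphi)$ is seen to be exactly this entrywise isomorphism. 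Everything else is a direct assembly of the facts already established.
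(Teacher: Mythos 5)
Your proposal follows the paper's own proof essentially step for step: part 1.) via \ref{indres}, \ref{consequ} and part 4.) of \ref{Hecke facts}; part 2.) from $H$ being a left inverse of $\hH$; part 3.) by pushing the decomposition \ref{Hecke on tensor space} through $\hH$ using \ref{folg}; and part 4.) by decomposing both endomorphism algebras into $\Hom$-blocks between the left ideals $\cH x_{(n-k,1^k)}$ and $Nx_{(n-k,1^k)}$, applying part 6.) of \ref{Hecke facts} blockwise, and invoking functoriality of $H$ for multiplicativity. Your added care about naturality of the blockwise isomorphism (so that it agrees with the map induced by the functor $H$) is a point the paper passes over with the single phrase ``by functoriality of $H$,'' but it is the same argument.
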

\medskip

\begin{proof}
By \ref{indres} we have $\rV \cong ( \IR_{n-1}^n)^r(Rx_{(n)})$. Applying $\hH$ gives with \ref{consequ}
$$
\hH(\rV) = \hH\circ\IR_{n-1}^n)^r(Rx_{(n)}) \cong ([\cI\cR]_{n-1}^n)(\hH(Rx_{(n)}) \cong  ([\cI\cR]_{n-1}^n)(\leftindex_G{R}),
$$
observing part 4.) of \ref{Hecke facts}. Thus part 1.) is shown. Part 2.) follows immediately, since $H$ is a left inverse of $\hH$.  Part 3.) follows by applying $\hH$ to the decomposition 
\ref{Hecke on tensor space} of $\fT^r_q$ and observing \ref{folg}. 

Now as $R$-module
\begin{equation*}
\begin{split}
\End_{RG}(\fT^r_q) 
& = \End_{RG}\left(\bigoplus_{k=1}^{\min(n,r)}s(r,k)Nx_{(n-k,1^k)}\right)\\ 
& = \End_{RG}\left(\bigoplus_{k=1}^{\min(n,r)}s(r,k)N\cH x_{(n-k,1^k)}\right)\\
& = \bigoplus_{k,m=1}^{\min(n,r)}s(r,k)s(m,k)\Hom_{RG}(N\cH x_{(n-k,1^k)}, N\cH x_{n-m,1^m)}).\\
\end{split}
\end{equation*}
This $R$-module is mapped by part 6.) of \ref{Hecke facts} under $H$ isomorphically to 
\begin{multline*}
\bigoplus_{k,m=1}^{\min(n,r)}s(r,k)s(m,k)\Hom_{\cH}(\cH x_{(n-k,1^k)}, \cH x_{n-m,1^m)})\\ = \End_\cH\left(\bigoplus_{k,m=1}^{\min(n,r)}s(r,k)s(m,r)\Hom_{\cH}(\cH x_{(n-k,1^k)}, \cH x_{n-m,1^m)}\right)
=\End_\cH(\rV).
\end{multline*}
By functoriality of $H$ this map respects multiplication showing part 4.). This finishes the proof of the theorem.
\end{proof}

\begin{Remark} Part 3.) of \ref{main 1} can also be seen directly using Mackey decomposition \ref{HC Mackey} for the Harish-Chandra functors. Indeed observing the Mackey decomposition formula \ref{HC Mackey} for Harish-Chandra restriction and induction the proof of formula \ref{assertion} carries over to an alternate proof of the formula 3.) in the theorem \ref{main 1} almost word by word replacing $\IR_{n-1}^n$ there by $\cIR_{n-1}^n$ and $Rx_{(n)}$ by $\leftindex_G{R}$.  
\hfill${\square}$\end{Remark}

In [\cite{halv thiem}] Halverson and Thiem  generalised the {\bf half integer partition algebra} 
$$
P_\rhalf(n) = \End_{R\fS_{n-1}}(\Res^{\fS_n}_{\fS_{n-1}}(\rV)) )
$$ 
to  {\bf half integer $q$-partition algebras} $Q_{\rhalf}(n,q)$ setting (comp. definition \ref{q-part halv thiem})
\begin{equation}\label{half q}
Q_{\rhalf}(n,q) =  \End_{RL_\la}(\cR^G_{L_\la}(\fT^r_q)) = \End_{RL_\la}(\cR^G_{L_\la}\cIR_{n-1}^n(\leftindex_G{R})),
\end{equation}
where $G = GL_n(q)$ and $ \la=(n,n-1)\vdash n$.

Now equation \ref{consequplushalf} implies immediately, setting $\parth = \End_ {\cH_\la}(\Res^\cH_{\cH_\la}\rV) $:

\begin{Theorem}\label{main2} The Hecke functor $H$ induces an $R$-algebra isomorphism 
$$
H: Q_{\rhalf}(n,q) =\End_{RL_\la}(\cR^G_{L_\la}\fT^r_q)  \to\End_ {\cH_\la}(\Res^\cH_{\cH_\la}\rV) = \parth,
$$
for any choice of $R\in\{F,\cO,K\}$, setting $\la=(n-1,1)\vdash n$.
\hfill${\square}$\end{Theorem}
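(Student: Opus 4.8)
The plan is to run the proof of part 4.) of Theorem \ref{main 1} through the restriction functors, replacing the pair $(G,\cH)$ by $(L_\la,\cH_\la)$ and the Hecke functor $H$ by its Levi version $H^\la$. First I would identify the two modules whose endomorphism rings are being compared. Applying $\hH^\la\Res^\cH_{\cH_\la}$ to the isomorphism $\rV\cong(\IR_{n-1}^n)^r(Rx_{(n)})$ of Theorem \ref{indres} and invoking \ref{consequplushalf} together with $\hH(Rx_{(n)})\cong R_G$ (part 4.) of \ref{Hecke facts}) and $\fT^r_q=(\cIR_{n-1}^n)^r(R_G)$, one gets $\hH^\la\big(\Res^\cH_{\cH_\la}\rV\big)\cong\cR^G_{L_\la}(\fT^r_q)$. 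Thus the $RL_\la$-module underlying $Q_{\rhalf}(n,q)$ is exactly the image under $\hH^\la$ of the $\cH_\la$-module underlying $\parth$.

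Next I would expose both modules as direct sums of permutation-type modules. By Theorem \ref{Hecke on tensor space} we have $\rV\cong\bigoplus_{k}s(r,k)\,\cH x_{(n-k,1^k)}$ as $\cH$-modules, and the Mackey decomposition in part 6.) of \ref{q-permmods} splits each $\Res^\cH_{\cH_\la}(\cH x_{(n-k,1^k)})$ into a direct sum of $q$-permutation modules $\cH_\la x_\nu$ with $Y_\nu\leq Y_\la$; every such summand is a left ideal of $\cH_\la$. Applying the additive functor $\hH^\la$ and using $\hH^\la(\cH_\la x_\nu)\cong N^\la x_\nu$ (part 2.) of \ref{Hecke facts}), the module $\cR^G_{L_\la}(\fT^r_q)$ becomes the matching direct sum of the modules $N^\la x_\nu$.

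Then I would compute the endomorphism algebra exactly as in Theorem \ref{main 1}. Expanding $\End_{RL_\la}(\cR^G_{L_\la}\fT^r_q)$ over the direct sum turns it into a direct sum of Hom-spaces $\Hom_{RL_\la}(N^\la x_\nu,N^\la x_{\nu'})$. The Levi version of part 6.) of \ref{Hecke facts}, applied to the left ideals $\cH_\la x_\nu,\cH_\la x_{\nu'}$, maps each of these isomorphically under $H^\la$ onto $\Hom_{\cH_\la}(\cH_\la x_\nu,\cH_\la x_{\nu'})$, and reassembling yields an $R$-linear isomorphism onto $\End_{\cH_\la}(\Res^\cH_{\cH_\la}\rV)=\parth$. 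Functoriality of $H^\la$ makes this bijection compatible with composition, so it is an $R$-algebra isomorphism, as claimed.

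The step I expect to be the main obstacle is justifying the Levi version of part 6.) of \ref{Hecke facts}: that fact is recorded for $(G,\cH)$, and one must check that the Hecke-functor formalism of [\cite{dipper1}] transfers verbatim to the Levi complement $L_\la\cong GL_{n-1}(q)\times GL_1(q)$ --- a product of general linear groups --- and its Iwahori-Hecke algebra $\cH_\la$, and that the summands $\cH_\la x_\nu$ are genuinely left ideals so that the isomorphism on Hom-spaces is available. Everything else is formally identical to the proof of Theorem \ref{main 1}, obtained by inserting $\cR^G_{L_\la}$ on the group side and $\Res^\cH_{\cH_\la}$ on the Hecke side.
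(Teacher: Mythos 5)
Your proposal is correct and follows essentially the same route as the paper: the paper's entire proof is the remark that equation \ref{consequplushalf} ``implies immediately'' the theorem, which is precisely your identification $\hH^\la(\Res^\cH_{\cH_\la}\rV)\cong\cR^G_{L_\la}(\fT^r_q)$ followed by the endomorphism-ring computation copied from part 4.) of Theorem \ref{main 1}. The one point you flag as an obstacle --- the Levi-subgroup version of part 6.) of \ref{Hecke facts} --- is indeed silently assumed by the paper (its functors $H^\mu$, $\hH^\mu$ are defined for Levi subgroups in \ref{functors}, and the results of [\cite{dipper1}] apply to products of general linear groups), so spelling it out only makes the argument more complete, not different.
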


Thus the half integer $q$-partition  algebra $Q_{\rhalf}(n,q) \cong \parth$  can as well be realised as endomorphism ring of the restriction to $\cH_\la$ of tensor space $\rV$.

Finally we observe that the decompositions of tensor space $\rV$ and the $RG$-module $\fT^r_q$ into permutation type modules in theorems \ref{indres} respectively  \ref{main 1} possesses half integer versions as well. For this we set $M^\mu_q (\la)=  \cI_{L_\mu}^{L_\la}(R_{L_\mu})$ for any $\mu\comp n$ with $Y_\mu\leq Y_\la$.

\begin{Theorem}\label{main3} Set again $\la=(n-1,1)\vdash n$. Then  
\begin{enumerate}
\item[1.)] The $\cH_\la$-module $\Res^\cH_{\cH_\la}\rV$ decomposes into $q$-permutation modules as follows:
$$
\Res^\cH_{\cH_\la}\rV \cong \bigoplus_{k=1}^{\min(n,r)}s(r+1,k)M_q^{(n-k,1^k)}(\la).
$$
\item[2.)] Similarly 
$$
\cR^G_{L_\la}(\fT^r_q) \cong \bigoplus_{k=1}^{\min(n,r)}s(r+1,k)\cI_{L_{(n-k,1^k)}}^{L_\la}(\leftindex_{L_{(n-k,1^k)}}{R}).
$$
\end{enumerate}
\end{Theorem}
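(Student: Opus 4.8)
The plan is to obtain part 1 by restricting the decomposition of $\rV$ from Theorem~\ref{Hecke on tensor space} one summand at a time, and then to derive part 2 by applying the Hecke functor $\hH^\la$ to part 1. Both steps recycle machinery already in place: the Mackey computation from the proof of Theorem~\ref{indres} and the functorial identities of \ref{Hecke facts}, so almost no new calculation is needed.

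For part 1, write each summand of $\rV\cong\bigoplus_k s(r,k)M_q^{(n-k,1^k)}$ as $M_q^{(n-k,1^k)}=\Ind_{\cH_\mu}^\cH(Rx_\mu)$ with $\mu=(n-k,1^k)$. Restricting to $\cH_\la$ and reusing the Mackey decomposition already computed in \ref{Mackey 2} gives
$$
\Res^\cH_{\cH_\la}M_q^{(n-k,1^k)}\cong k\,M_q^{(n-k,1^k)}(\la)\ \oplus\ M_q^{(n-k-1,1^{k+1})}(\la),
$$
valid for $k<n$, with the second summand absent when $k=n$. Weighting by $s(r,k)$ and collecting the multiplicity of a fixed $M_q^{(n-j,1^j)}(\la)$ — it receives $j\,s(r,j)$ from the first type (take $k=j$) and $s(r,j-1)$ from the second (take $k=j-1$) — the Stirling recurrence $s(r+1,j)=j\,s(r,j)+s(r,j-1)$ fuses the two into $s(r+1,j)$. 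The cleanest route is simply to note that $\Res^\cH_{\cH_\la}\rV=\Res^\cH_{\cH_\la}(\IR_{n-1}^n)^r(Rx_{(n)})$ by Theorem~\ref{indres}, so that equation~\ref{Mackey 4} from that proof, read with $r$ replaced by $r+1$, already is the desired decomposition.

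For part 2, the bridge is \ref{consequplushalf} with $\mu=\la$: evaluating both sides at the trivial module $Rx_{(n)}$, the left-hand side becomes $\hH^\la(\Res^\cH_{\cH_\la}\rV)$ by Theorem~\ref{indres}, while the right-hand side becomes $\cR^G_{L_\la}([\cI\cR]_{n-1}^n)^r(R_G)=\cR^G_{L_\la}(\fT^r_q)$ using part 4.) of \ref{Hecke facts} (which gives $\hH(Rx_{(n)})=R_G$) and Definition~\ref{q-part halv thiem}. Hence $\cR^G_{L_\la}(\fT^r_q)\cong\hH^\la(\Res^\cH_{\cH_\la}\rV)$. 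Now apply the additive functor $\hH^\la$ to part 1 and identify each image via
$$
\hH^\la\bigl(M_q^{(n-k,1^k)}(\la)\bigr)=\hH^\la\Ind_{\cH_\mu}^{\cH_\la}(Rx_\mu)\cong\cI_{L_\mu}^{L_\la}\hH^\mu(Rx_\mu)\cong\cI_{L_\mu}^{L_\la}(R_{L_\mu}),
$$
which is the $L_\la$-relative form of \ref{functorisos} together with part 4.) of \ref{Hecke facts}. This yields exactly the claimed decomposition of $\cR^G_{L_\la}(\fT^r_q)$.

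The difficulties are bookkeeping, not conceptual. The genuinely delicate point in part 1 is the summation range: the second Mackey summand raises the top index to $j=\min(n,r+1)$ (it equals $\min(n,r)+1$ precisely when $r<n$), so the upper limit in both displays should read $\min(n,r+1)$ rather than $\min(n,r)$, as one already sees at $n=3,r=1$, where $\Res^\cH_{\cH_\la}\rV\cong M_q^{(2,1)}(\la)\oplus M_q^{(1^3)}(\la)$; tracking the boundary case $k=n$, where the second summand vanishes, is what fixes this limit and rules out stray terms. In part 2 the one ingredient not stated verbatim in the excerpt is the relative compatibility $\hH^\la\Ind_{\cH_\mu}^{\cH_\la}\cong\cI_{L_\mu}^{L_\la}\hH^\mu$, i.e. \ref{functorisos} with $G$ replaced by the Levi $L_\la$ and $L_\mu\le L_\la$; this holds by the same argument, since the Hecke and Harish-Chandra functors and their interchange were set up for arbitrary products of general linear groups, but it should be invoked explicitly.
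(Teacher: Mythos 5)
Your proposal is correct and takes essentially the same route as the paper: for part 1 the paper likewise restricts the decomposition of Theorem \ref{Hecke on tensor space} summand by summand via Mackey decomposition, and it offers exactly your ``cleanest route'' as an alternative, namely rereading equation \ref{Mackey 4} from the proof of Theorem \ref{indres} with $r$ replaced by $r+1$; for part 2 the paper, like you, applies the Hecke functors to part 1, leaving implicit the relative compatibility $\hH^\la\Ind_{\cH_\mu}^{\cH_\la}\cong\cI_{L_\mu}^{L_\la}\hH^\mu$ that you rightly say should be invoked explicitly. Your remark about the summation bound is also correct and exposes a genuine misprint in the theorem as stated: reading \ref{Mackey 4} with $r$ replaced by $r+1$ produces the upper limit $\min(n,r+1)$, which exceeds $\min(n,r)$ precisely when $r<n$, and in that case the stated formula omits the summand $s(r+1,r+1)\,M_q^{(n-r-1,1^{r+1})}(\la)$. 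A rank count settles this: $\dim M_q^{(n-k,1^k)}(\la)=(n-1)!/(n-k)!$, and $\sum_{k=1}^{\min(n,r+1)}s(r+1,k)\,(n-1)!/(n-k)!=n^r$ as it must be, whereas your example $n=3$, $r=1$ gives rank $1$ instead of $3$ under the stated bound; the same correction applies to the display in part 2.
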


\begin{proof} For the proof of 1.) note that theorem \ref{Hecke on tensor space} implies
\begin{equation*}
\begin{split}
\Res^\cH_{\cH_\la}\rV
& \cong  \bigoplus_{k=1}^{\min(n,r)}s(r,k)\Res^\cH_{\cH_\la}(M_q^{(n-k,1^k)}(\la))\\ 
&\cong  \bigoplus_{k=1}^{\min(n,r)}s(r,k)\Res^\cH_{\cH_\la}\Ind_{\cH_{(n-k,1^k)}}^\cH(Rx_{(n-k,1^k)}).
\end{split}
\end{equation*}
Applying  Mackey decomposition \ref{Mackey} yields the desired result. 

Alternatively, inspect the induction step in the proof of  \ref{indres} replacing $r$ by $r+1$ and noting, that 
\begin{equation*}
\begin{split}
V^{\otimes (r+1)}
&\cong \IR_{n-1}^n(\rV)\\ 
&= \Ind_{\cH_\la}^\cH(\Res^\cH_{\cH_\la}\rV)\\
& \cong \Ind_{\cH_\la}^\cH(\bigoplus_{k=1}^{\min(n,r)} s(r,k)\Res^\cH_{\cH_\la} M^{(n-k,1^k)}_q)\\
& =  \Ind_{\cH_\la}^\cH(\bigoplus_{k=1}^{\min(n,r)}s(r,k)\Res^\cH_{\cH_\la}\Ind_{(n-k,1^k)}^\cH(Rx_{(n-k,1^k)})).
\end{split}
\end{equation*}
Now equation  \ref{Mackey 4} in the proof of \ref{indres} yields the assertion. Part 2.) follows  easily by applying the Hecke functors \ref{functors} to the decomposition in the first part. 
\end{proof}

In [\cite{donkin 2}] Donkin proved the bicentralizer property for finite direct sums $M$ of $q$-permutation modules $M^\mu, \mu\vdash n,$ for Iwahori-Hecke algebras $\cH = \cH_{R,q}(\fS_n)$, where $R$ is a field or a Dedekind domain and the partitions $\mu$  labelling the summands satisfy a certain combinatorial condition, (namely that the "coarsening" of the partitions labelling the summands is cosaturated, for details see [\cite{donkin 2}]). Thus he showed in this case, that $\cH$ and $\cA = \End_\cH(M)$ are mutual centralizing algebras in $\End_R(M)$, that is $\End_\cA(M) = \cH$ as well. The set of hook partitions labelling the $q$-permutation modules appearing as direct summands in $\rV$ do not depend by \ref{Hecke on tensor space} and \ref{main3} on the choice of the unit $q$. Moreover  Donkin showed for $q=1$  in  [\cite{donkin 2}, 6.3], that their coarsening is cosaturated and hence $\rV$ satisfies Schur-Weyl duality. 

This implies now immediately:

\begin{Theorem}\label{main4} Let $R$ be a field or a Dedekind domain. Set $\cH = \cH_{R,q}(\fS_n)$ and $\cH_{R,q}(\fS_{n-1}) \cong \cH_\la$ for $\la=(n-q,1)\vdash n$. Then $\cH$ and $\part$ respectively $\cH_\la$ and $\parth$ acting on tensor space $\rV$ satisfy Schur-Weyl duality.

\hfill${\square}$\end{Theorem}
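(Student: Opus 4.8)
The plan is to obtain both dualities as instances of Donkin's bicentralizer theorem [\cite{donkin 2}] quoted above, by checking that tensor space and its restriction are the direct sums of hook-labelled $q$-permutation modules to which that theorem applies, and by exploiting that the combinatorial hypothesis of the theorem (cosaturation of the coarsening of the labelling partitions) concerns the partitions alone and is therefore insensitive to $q$. Since $\part=\End_\cH(\rV)$ and $\parth=\End_{\cH_\la}(\Res^\cH_{\cH_\la}\rV)$ hold by definition, each Schur--Weyl duality amounts to the single remaining inclusion, namely the bicentralizer identities $\End_{\part}(\rV)=\cH$ and $\End_{\parth}(\Res^\cH_{\cH_\la}\rV)=\cH_\la$.

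For the integral case I would start from the decomposition of \ref{Hecke on tensor space},
$$
\rV\cong\bigoplus_{k=1}^{\min(n,r)}s(r,k)\,M_q^{(n-k,1^k)},
$$
which presents $\rV$ as a finite direct sum of $q$-permutation modules whose labelling set $\{(n-k,1^k):1\le k\le\min(n,r)\}$ is a set of hook partitions of $n$ independent of $q$. For $q=1$ Donkin verified in [\cite{donkin 2}, 6.3] that the coarsening of exactly this set is cosaturated; being a purely combinatorial property of the partitions, it continues to hold for every unit $q\in R$. Donkin's theorem, applied over $R\in\{F,\cO,K\}$ for general $q$, then yields $\End_{\part}(\rV)=\cH$, and together with the definition of $\part$ this says that $\cH$ and $\part$ centralise each other in $\End_R(\rV)$.

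For the half-integer case I would run the same argument through the identification $\cH_\la\cong\cH_{R,q}(\fS_{n-1})$. By \ref{main3} we have
$$
\Res^\cH_{\cH_\la}\rV\cong\bigoplus_{k=1}^{\min(n,r)}s(r+1,k)\,M_q^{(n-k,1^k)}(\la),
$$
a finite direct sum of $q$-permutation modules for $\cH_\la$. The bookkeeping step is to read each summand $M_q^{(n-k,1^k)}(\la)=\Ind_{\cH_{(n-k,1^k)}}^{\cH_\la}(Rx_{(n-k,1^k)})$ as a $q$-permutation module for $\cH_{R,q}(\fS_{n-1})$: because $Y_\la$ fixes the last point $n$, the Young subgroup $Y_{(n-k,1^k)}$ sits inside $Y_\la\cong\fS_{n-1}$ as the standard Young subgroup of the composition $(n-k,1^{k-1})$ of $n-1$, so $M_q^{(n-k,1^k)}(\la)$ is the $q$-permutation module labelled by the hook partition $(n-k,1^{k-1})\vdash n-1$. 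As $k$ runs over $1,\dots,\min(n,r)$ these exhaust a set of hook partitions of $n-1$, again independent of $q$, to which Donkin's criterion applies with $n-1$ in place of $n$; the same $q$-independence then gives $\End_{\parth}(\Res^\cH_{\cH_\la}\rV)=\cH_\la$.

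The one genuinely deep ingredient, the cosaturation of the coarsening, is Donkin's; on our side only two points need care. First, in the half-integer case the re-indexed labelling set of hook partitions of $n-1$ contains the trivial partition $(n-1)$ (the image of the $k=1$ summand, which does not occur in the tensor space of an $(n-1)$-dimensional module), so I would check that [\cite{donkin 2}, 6.3], or a short direct computation, certifies cosaturation of the coarsening for this slightly different set as well. Second, one should confirm that Donkin's bicentralizer theorem is available over each coefficient ring $R\in\{F,\cO,K\}$ and for arbitrary units $q$, rather than only over a field at $q=1$; granting these, both dualities follow formally from the decompositions above.
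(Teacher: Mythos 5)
Your proposal is correct and takes essentially the same route as the paper: both deduce the duality from Donkin's bicentralizer theorem in [\cite{donkin 2}], using that the hook partitions labelling the $q$-permutation summands in \ref{Hecke on tensor space} and \ref{main3} do not depend on $q$, while cosaturation of their coarsening was verified by Donkin for $q=1$ in [\cite{donkin 2}, 6.3]. You merely make explicit some details the paper's terse argument leaves implicit, notably re-reading the half-integer summands $M_q^{(n-k,1^k)}(\la)$ as $q$-permutation modules for $\cH_{R,q}(\fS_{n-1})$ labelled by hooks of $n-1$.
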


\section{Concluding remarks}

The decompositions  \ref{Hecke on tensor space} and \ref{main3} of  tensor space $\rV$ as $\cH$-module respectively as $\cH_\la$-module, where again $\la=(n-1,1)\vdash n,$ allows us to exhibit an $R$-basis of the $q$-partition algebra $\part$ and the half integer $q$-partition algebra $\parth$. This is derived from the standard basis of the {\bf $q$-Schur algebra} $\cS_q(n,R) = \End_\cH(\bigoplus_{\mu\comp n}M^\mu_q)$, which plays an important role in the representation theory of general linear groups, (see e.g. [\cite{mathas 1}]). The basic idea is the following:

We have as $R$-modules, using general principles on $\Hom$-spaces:
\begin{equation}\label{qSchur basis integer}
\begin{split}
\part
& = \Hom_\cH(\bigoplus_{k=1}^{\min(n,r)} s(r,k) M_q^{(n-k,1^k)},\bigoplus_{k=1}^{\min(n,r)} s(r,k) M_q^{(n-k,1^k)})\\
&\cong \bigoplus_{k,\ell=1}^{\min(n,r)}s(r,k)s(r,\ell)\Hom_\cH( M_q^{(n-k,1^k)}, M_q^{(n-\ell,1^\ell)}),
\end{split}
\end{equation}
where a basis  of $\Hom_\cH( M_q^{(n-k,1^k)}, M_q^{(n-\ell,1^\ell)})$ as $R$-module is given in equation \ref{qSchurbasis} in  the list of facts \ref{q-permmods}. A similar formula can be obtained for the half integer $q$-partition algebra $\parth$. In a forthcoming paper we shall investigate these bases and exhibit in particular its action on the $R$-basis $\cE^r$ of tensor space $\rV$.

As pointed out in the introduction the classical partition algebra $P_r(Q)$  is defined as associative $R$-algebra, $Q\in R$,  with a 
basis consisting of diagrams to set partition of $\{1,2,\ldots,2r\}$ and multiplication given by diagram concatenation. Xi proved in [\cite{Xi}], that $P_r(Q)$ is cellular, if $R$ is a field and $Q\in R$. It is a generalization both of the Brauer algebra, and also of the Temperley–Lieb algebra. Moreover Martin showed in [\cite{martin 3}], that $P_r(Q)$ is quasi-hereditary for fields $R$ of characteristic $0$ and $0\neq Q\in R$. This is not true in general for fields $R$ of positive characteristic, Xi produced in [\cite{Xi}] a counterexample.

If $Q=n\in\N$ is a natural number, then $\End_{R\fS_n}(\rV)$ is always epimorphic image of $P_r(n)$ and precisely an isomorphic image if $n\geq 2r$. In [\cite{donkin 1}] Donkin actually proved a general result, generalising in case $n\geq 2r$  the result of Xi above and determining precisely, when partition algebras over fields are quasi hereditary. He showed in particular that endomorphism rings of finite direct sums of permutation modules on Young subgroups of $\fS_n$ are cellular. It seems very likely that his methods can be generalised to Iwahori-Hecke algebras and their $q$-permutation modules, determining precisely, when $q$-partition algebras over fields are quasi hereditary. 

We conjecture that here is a tangle algebra $\widetilde{\cP}$ with a basis labelled by set partitions of $\{1,2,\ldots,2r\}$ which specializes to the partition algebra $P_r(Q)$ for certain values of the involved parameters and contains the Birman–Murakami–Wenzl algebra as subalgeba. The $q$-partition algebra $\part$ should be epimorphic image of specialised algebras $\widetilde{\cP}$ and be isomorphic image of those for $n\geq 2r$. Obviously it seems likely, that over fields, $\widetilde{\cP}$ should be cellular and over fields of characteristic $0$ even quasi-hereditary.

 \section*{Declarations}
\begin{itemize}
\item Ethical Approval: Not applicable
\item Competing Interest: Nil
\item Authors contribution: Both authors contributed equally
\item Funding: The first author's research was partially supported by DST-SERB under grants MATRIX MTR/2017/000424 and Power SPG/2021/004200. The first author would like to extend her gratitude to the Humboldt Foundation and Prof. Steffen Koenig for their support of her stay at University of Stuttgart.
\item Availability of data and material: Not applicable
\end{itemize}

\providecommand{\bysame}{\leavevmode ---\ }
\providecommand{\og}{``} \providecommand{\fg}{''}
\providecommand{\smfandname}{and}
\providecommand{\smfedsname}{\'eds.}
\providecommand{\smfedname}{\'ed.}
\providecommand{\smfmastersthesisname}{M\'emoire}
\providecommand{\smfphdthesisname}{Th\`ese}

\end{document}